%% file: isolation.tex
\documentclass[12pt,reqno]{amsart}

\usepackage[arrow,matrix,curve]{xy}

\usepackage[dvips]{graphicx} 
\usepackage{mathtools}

\usepackage{amssymb, latexsym, amsmath, amscd, array, esint, yhmath,
}

\usepackage{tikz}

\definecolor{Black}{RGB}{0, 0, 0}

\usepackage[breaklinks,colorlinks=true,allcolors=Black]{hyperref}

\usepackage{breakurl}   

\makeatletter
\@namedef{subjclassname@2020}{\textup{2020} Mathematics Subject Classification}
\makeatother

\newtheorem{theoremABC}{Theorem}

\newtheorem{theorem}{Theorem}[section]
\newtheorem{lemma}[theorem]{Lemma}

\newtheorem{proposition}[theorem]{Proposition}

\theoremstyle{definition}

\newtheorem{example}[theorem]{Example}
\newtheorem{remark}[theorem]{Remark}

\numberwithin{equation}{section}

\newcommand\N {{\mathbb N}} 
\newcommand\C {{\mathbb C}}
\newcommand\R {{\mathbb R}}

\newcommand\T {{\mathbb T}} 
\newcommand\K {{\mathbb K}}

\DeclareMathOperator{\area}{{\rm area}}

\DeclareMathOperator{\sys}{{\rm sys}}

\newcommand{\RP}{{\mathbb R\mathbb P}}

\long\def\forget#1\forgotten{} %

\numberwithin{equation}{section}

\title[Klein bottle and optimal systolic inequality] {Klein bottle and
  optimal systolic inequality for nonpositively curved surfaces}

\author[M. Katz]{Mikhail G. Katz} \address{Department of
  Mathematics, Bar Ilan University, Ramat Gan 5290002 Israel}
\email{katzmik@math.biu.ac.il}

\author[S.\;Sabourau]{St\'ephane Sabourau}
\address{\parbox{\linewidth}{Univ Paris Est Creteil, CNRS, LAMA, F-94010 Creteil, France \\
Univ Gustave Eiffel, LAMA, F-77447 Marne-la-Vall\'ee, France}}
\email{stephane.sabourau@u-pec.fr}

\subjclass[2020]{Primary 53C20; Secondary 53C23}

\keywords{systole, systolic area, nonpositively curved surfaces}

\begin{document}

\begin{abstract}
We show that the systolic area of every nonpositively curved closed
surface~$M$ other than the torus is at least~$1$, with equality if and
only if $M$ is isometric to a square flat Klein bottle.  The proof
focuses on the Klein-double $4\RP^2$ and exploits Weil's isoperimetric
inequality and a comparison theorem involving a new kind of
exponential-type map.
\end{abstract}

\maketitle

\section{Introduction}

The systole of a closed nonsimply connected surface~$M$ with a
Riemannian metric is defined as the minimal length of a
noncontractible closed curve.  It is realized by the length of a
noncontractible closed geodesic.  The systolic area of~$M$ is then
defined as
\[
\sigma(M) = \frac{\area(M)}{\sys(M)^2}.
\]


Pu's systolic inequality~\cite{pu} asserts that the minimal systolic area of the projective plane~$\RP^2$ equals~$\frac{2}{\pi}$ and is attained precisely by the round metrics.
Gromov~\cite{gro-frm} later proved that the systolic area of every closed surface other than~$S^2$ or~$\RP^2$ is at least~$\frac{3}{4}$.
Together, these results imply that the systolic area of a closed nonsimply connected surface is at least~$\frac{2}{\pi}$, with equality if and only if the surface is a round projective plane.

For arbitrary metrics, optimal systolic inequalities are notoriously difficult to establish.
They are known for only three closed surfaces: the torus (see~\cite{katz-book}), the projective plane~\cite{pu} and the Klein bottle~\cite{bav}.
For noncompact surfaces, analogous optimal inequalities have recently been obtained for the three- and four-punctured plane~\cite{JS}, where the systole is replaced by the length of the shortest noncontractible closed geodesic.

Despite this progress, fundamental questions remain open: for example, it is still unknown whether every genus~$g>1$ surface satisfies the same systolic inequality as Loewner's on the torus, even for $g=3$ (see~\cite{KS05}, \cite{KS06b} and~\cite{KS12} for results in this direction).

One might expect that the larger the absolute value of the Euler characteristic of a surface, the larger its minimal systolic area should be.  
While it is unclear whether such monotonicity holds, it is known that the systolic area tends to infinity with the absolute value of the Euler characteristic; see~\cite{gro-frm} and~\cite{KS05}.

\medskip

Recent work has focused on systolic inequalities under curvature
assumptions.  For nonpositively curved metrics, we showed
in~\cite{KS24} that the systolic area of every closed nonpositively
curved surface is at least~$\frac{\sqrt{3}}{2}$, with equality if and
only if the surface is a flat hexagonal torus.

\medskip

In such a nonpositively curved setting, we prove that the next possible
value of the minimal systolic area is realized by the square flat
Klein bottle.

\begin{theoremABC} \label{theo:main}
Each nonpositively curved closed surface $M$ other than the torus
satisfies $\sigma(M) \geq 1$, i.e.,
\[
\area(M)\geq\sys(M)^2,
\]
with equality if and only if $M$ is isometric to a square flat Klein
bottle.
%
\end{theoremABC}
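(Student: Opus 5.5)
The plan is to separate the flat case from the negative Euler characteristic case. A nonpositively curved closed surface other than the torus is either a Klein bottle, which by Gauss--Bonnet must then be flat, or has $\chi(M)<0$.

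\emph{Flat Klein bottles.} I would write such a bottle as $\R^2/\Gamma$, where $\Gamma$ is generated by the glide reflection $(x,y)\mapsto(x+a,-y)$ and the translation $(x,y)\mapsto(x,y+b)$. Its area is $ab$. The horizontal line $y=0$ projects to a one-sided closed geodesic of length $a$, and a vertical segment projects to a closed geodesic of length $b$. Both are noncontractible, so $\sys(M)\le\min(a,b)$ and $\area(M)=ab\ge\sys(M)^2$. Equality forces $a=b=\sys(M)$, which is the square flat Klein bottle. It then remains to show the strict inequality $\area(M)>\sys(M)^2$ when $\chi(M)<0$.

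\emph{Reduction for $\chi(M)<0$.} I would first reduce to the case suggested by the abstract, the Klein double $4\RP^2$, meaning $\chi=-2$. Passing to a cover of $M$ multiplies the area by the degree but only increases the systole, so coverings cannot be used naively. Instead I would look for a subsurface argument: cut $M$ open along suitable systolic geodesics to isolate a region carrying the topology of a Klein bottle with a boundary component. Alternatively, I would find inside $M$ a $\pi_1$-injective immersed piece whose geometry is controlled by the systole. The point of $4\RP^2$ is that it contains two disjoint Möbius bands (indeed a Klein bottle with a hole), so one-sided systolic loops are available, as in the flat model.

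\emph{Key construction.} Fix a shortest noncontractible closed geodesic $\gamma$ of length $s=\sys(M)$, chosen one-sided if possible. I would consider the exponential-type map from the normal bundle of $\gamma$, restricted to vectors of length $< s/2$, or more precisely a modified map adapted to the Klein-bottle combinatorics. Since the curvature is $\le 0$, Jacobi field comparison (Rauch) shows that this map does not decrease area relative to the flat model. So if it is injective, its image has area at least that of a flat $s\times s$ Möbius band or annulus, namely $s^2$. Injectivity should follow from a standard cut-and-paste argument: two normal geodesic segments of length $<s/2$ meeting at a point, together with an arc of $\gamma$, would produce a noncontractible loop of length $<s$. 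The exception is when this loop is homotopic to a power or a translate of $\gamma$, which is exactly the torus-like degeneracy. Ruling out the exception is where the topology ($M\neq T^2$, $\chi(M)<0$) enters.

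\emph{Main obstacle.} I expect the hardest step to be handling the non-injective regions, where the normal strip wraps around and overlaps itself. There I would use Weil's isoperimetric inequality for nonpositively curved disks, $4\pi\,\area\le L^2$, to bound the area of the overlapping pieces in terms of boundary lengths. This should show that the total area lost to overlap is compensated by area outside the strip, which is forced to exist because $\chi(M)<0$ (the surface cannot be exhausted by a single flat-like band). Strict inequality would come from the fact that equality in the comparison theorem forces zero curvature on the strip, which is incompatible with $\chi(M)<0$ by Gauss--Bonnet.
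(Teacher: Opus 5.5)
Your Klein bottle case is correct: Gauss--Bonnet forces flatness, and $\sys(M)\le\min(a,b)$ gives $ab\ge\sys(M)^2$, with equality only when $a=b$. You should add that the square bottle has systole exactly $a$, so equality is attained. The paper takes this case for granted.

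\textbf{The gap is in the case $\chi(M)<0$, at your key step.} The cut-and-paste argument does not prove injectivity of the normal map of width $s/2$. Suppose normals of lengths $\ell_1,\ell_2<s/2$ from distinct points $\gamma(t_1),\gamma(t_2)$ meet. Closing up through an arc of $\gamma$ of length $a\le s/2$ gives a loop of length $\ell_1+\ell_2+a$, which is only bounded by $3s/2$, not by $s$. The argument rules out only two normals issuing from the \emph{same} point of $\gamma$.

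The overlap really occurs. In the flat hexagonal torus, the width normal to $\gamma$ is $\frac{\sqrt3}{2}s<s$, so a collar of total width $s$ overlaps. The resulting loop lies in a primitive class different from $\pm[\gamma]$, so your description of the exceptional case is also inaccurate. More to the point, the Bolza surface is hyperbolic of genus two, with area $4\pi$ and systole $s=2\operatorname{arccosh}(1+\sqrt2)\approx3.06$. An embedded collar of width $s/2$ around a closed geodesic of length $s$ would have area $2s\sinh(s/2)\approx13.4>4\pi$. So when $\chi<0$, self-overlap of the collar is typical, not a degenerate case that topology excludes.

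This leaves everything to your last step, which is not yet an argument. You do not say which disks Weil's inequality is applied to, nor why the area forced outside the strip exceeds the area lost to overlap. With margins this thin, that is the whole difficulty: the paper's final bound is only $\sigma_\chi\approx1.002$, after a first comparison attempt falls short at $0.99908$. Your reduction paragraph is likewise not carried out, and it cannot reach orientable surfaces, which contain no Klein bottle with a hole.

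For comparison, the paper never uses a collar, and its steps are these:
\begin{enumerate}
\item It takes the disk $D$ of radius $\frac12\sys(M)$ from \cite{KS24} with the area bound \eqref{eq:D}; such a disk is always embedded in nonpositive curvature.
\item It measures the remaining area inside the Voronoi cell $V_0$ in the universal cover. This cell is a fundamental domain, so no overlap can arise, and topology enters only through the bound $n\le6|\chi|+6$ on its number of sides.
\item It applies Weil's inequality to $\partial D_0$ itself. The extra circumference is converted into area through the conical plane $\mathcal{T}_\theta$ and the noncontracting map $\Phi_\theta$, using $\Phi_\theta(Q)\subseteq V_0$.
\item The reduction to $4\RP^2$ uses \eqref{eq:chi} for $|\chi|\ge3$, together with the known optimal bounds for $\Sigma_2$ and $3\RP^2$.
\end{enumerate}
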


To summarize, the first minimal systolic area among nonpositively
curved surfaces is~$\tfrac{\sqrt{3}}{2}$, attained uniquely by the
flat hexagonal tori.  The second minimal systolic area is~$1$,
realized uniquely by the square flat Klein bottles.  The value of the
third minimal systolic area remains unknown, although it is bounded
below by~$1.002$; see Theorem~\ref{BB}.  A natural candidate is the
systolically extremal nonpositively curved metric on the connected
sum~$3\RP^2$ of three copies of~$\RP^2$ -- also known as Dyck's
surface -- constructed in~\cite{KS15}, whose systolic area is
$1+\frac{(169-38\sqrt{19})^{1/2}}{12} \simeq 1.15$.  This metric is
flat with finitely many conical singularities, a feature shared by all
systolically extremal nonpositively curved surfaces by~\cite{KS21}.

\medskip

The proof of Theorem~\ref{theo:main} proceeds in three steps:

First, it builds on the existence of a
large-area disk $D\subseteq M$ established in~\cite{KS24}, which follows from an averaging argument using the invariance of the Liouville measure under the geodesic flow.  Combined
with earlier systolic inequalities for the genus-two
surface~\cite{KS06} and for Dyck's surface~\cite{KS15}, this result
allows us to narrow down the problem to the case of the surface
$4\RP^2$ of Euler characteristic~$-2$.  Since this surface is
homeomorphic to the double of a Klein bottle, we will refer to it as
the \emph{Klein-double}.  

Second, we study the polygonal Voronoi cell concentric
with the large-area disk, control the number of its edges, and then
apply a comparison inequality in nonpositive curvature as
in~\cite{KS06} to enhance the previous bound.  

Third, while such an argument alone does not suffice, we develop a new
kind of exponential-type map, and exploit it in combination with
Weil's isoperimetric inequality in the Cartan--Hadamard plane.  The
new exponential-type map is noncontracting, and takes advantage of the
extra boundary length of the large-area disk, enabling us to further
boost the estimate on the surrounding area -- pushing it beyond the
Klein bottle bound.

\medskip

Actually, we are able to obtain the following slightly stronger bound
than that of Theorem~\ref{theo:main}.

\begin{theoremABC}
\label{BB}
A closed surface $M$ of negative Euler characteristic with a
nonpositively curved metric satisfies $\sigma(M) > 1.002$.
\end{theoremABC}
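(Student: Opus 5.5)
We follow the three-step strategy outlined in the introduction. Throughout, normalize $\sys(M)=1$, write $\chi=\chi(M)<0$, and use Gauss--Bonnet, $\int_M K=2\pi\chi$, together with $K\le 0$.

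\textbf{Step 1: reduction to the Klein-double.} We invoke the large-area disk from~\cite{KS24}. The averaging argument with the Liouville measure produces a point $p$ such that the disk $D=B(p,\tfrac12)$ is embedded. Because $K\le0$, its area is at least $\tfrac{\pi}{4}$, with an excess over the flat value controlled by the total curvature. For surfaces with $|\chi|$ large, this excess, coupled with Gauss--Bonnet and the known bounds tending to infinity with $|\chi|$~\cite{gro-frm,KS05}, already gives $\sigma(M)>1.002$ in quantitative form. The genus-two surface is handled by~\cite{KS06}, whose bound for nonpositively curved genus-two metrics exceeds $1.002$. Dyck's surface $3\RP^2$ is handled by~\cite{KS15}, where the optimum is about $1.15$. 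What remains is the finite list of surfaces with $\chi=-2$ not yet covered, namely $4\RP^2$, and possibly small $|\chi|$ cases in which the averaged disk estimate alone is insufficient. Each of these must be checked against the explicit constants.

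\textbf{Step 2: Voronoi cell on $4\RP^2$.} On the Klein-double we lift to the universal cover, a Cartan--Hadamard plane $\widetilde M$. We take the Dirichlet--Voronoi cell $V$ of a lift $\tilde p$ with respect to the deck group. Since the systole is $1$, $V$ contains the ball $B(\tilde p,\tfrac12)$. Each edge of $V$ lies on a bisector of $\tilde p$ and some $\gamma\tilde p$ with $d(\tilde p,\gamma\tilde p)\ge1$. We then control the number of edges of $V$ using Euler characteristic and combinatorial arguments, as for polygonal fundamental domains of a $\chi=-2$ surface. Next we apply the comparison inequality of~\cite{KS06}: in nonpositive curvature, the region of $V$ between the inscribed disk and each edge has at least the area of the corresponding flat region. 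This yields a first bound $\area(M)=\area(V)\ge c_1$. We expect $c_1$ to fall slightly short of $1.002$.

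\textbf{Step 3: the exponential-type map and Weil's inequality (main obstacle).} We exploit the fact that $\partial D$ is longer than $\pi$ whenever $D$ has large area. To do so, we define a map $\Phi$ from a flat model region, such as an annulus or star around a disk of the same boundary length, into $V$. The map sends the flat normal rays off the model circle to geodesics leaving $\partial D$ orthogonally, reparametrized along the boundary by arclength. The key lemma is that $\Phi$ is noncontracting. This should follow from Jacobi field comparison for $K\le0$ together with the convexity of the distance to $\partial D$ in a Cartan--Hadamard plane. Noncontraction gives $\area(V\setminus D)$ at least the flat model area. We then combine this with Weil's isoperimetric inequality $L^2\ge 4\pi A$ for domains in the Cartan--Hadamard plane, which trades the area of $D$ against its boundary length. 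Optimizing over the free parameter (the area of $D$, or equivalently the length of $\partial D$) should give $\area(M)>1.002$.

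The hardest part will be making $\Phi$ well-defined and noncontracting up to the Voronoi edges, where normal geodesics from $\partial D$ may exit $V$ or cross one another. We would first try to restrict the rays to the region before they meet their first edge. We would then verify injectivity through the convexity of the distance function. Finally, a numerical optimization of the resulting one-parameter inequality should confirm the margin $0.002$.
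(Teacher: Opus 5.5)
Your outline follows the paper's architecture, but it leaves unspecified the step that actually produces the extra area, and the model you do describe would not reach $1.002$. The paper does not use a flat annulus around a disk of circumference $L$. It uses the flat cone $\mathcal{T}_\theta$ with cone angle $\theta=2\pi\sqrt{1+\pi|\chi|/(24\sigma_\chi)}>2\pi$. In this cone the disk $D_\theta$ keeps radius exactly $\tfrac12\sys(M)$, while its circumference $\tfrac12\theta\,\sys(M)$ is at most $L$.

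Keeping the radius is essential, for three reasons:
\begin{itemize}
\item it gives $d(x_0,\Phi_\theta(\omega))=d(\omega_0,\omega)$;
\item it places $\omega_i=\Phi_\theta^{-1}(x_i)$ at distance $d(x_0,x_i)\geq\sys(M)$ from the apex;
\item it lets the straight equidistant lines $\Delta_i^\theta$ bound a polygon $Q$ with $\area(Q)\geq\frac n4\tan\frac{\theta}{2n}\,\sys(M)^2$, by Jensen over the total angle $\theta$.
\end{itemize}
In a Euclidean model with a disk of radius $L/2\pi>\tfrac12\sys(M)$, distances to the center are shifted by $c=L/2\pi-\tfrac12\sys(M)$. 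The bisector of $\omega_0,\omega_i$ is then no longer the right boundary; when $d(x_0,x_i)=\sys(M)$ it even cuts into the model disk. What you can certify is $\{\omega : |\omega|-c\leq|\omega-\omega_i|\}$, a region bounded by hyperbola branches. In the regular configuration ($n=18$, all $d(x_0,x_i)=\sys(M)$) its area is only about $1.0018\,\sys(M)^2$ by my estimate, even with $\sigma_\chi$ replaced by $1$.

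You also never say which model region you push forward, or why its image lies in $V$. Since $\Phi_\theta$ is only locally noncontracting, the paper needs a separate lemma with two cases:
\begin{itemize}
\item if the minimizing arc from $x_i$ to $y=\Phi_\theta(\omega)$ avoids $D_0$, pull it back to get $d(x_0,y)=d(\omega_0,\omega)\leq d(\omega_i,\omega)\leq d(x_i,y)$;
\item if it meets $D_0$, use $D_0\subseteq V_0$ and the triangle inequality.
\end{itemize}
Your concern about normal geodesics crossing is moot. They are the radial geodesics from $x_0$ in a Cartan--Hadamard plane, and $\Phi_\theta$ is defined on all of $\mathcal{T}_\theta\setminus D_\theta$. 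Truncating rays at the first edge would not give a region of computable area.

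Because the margin is this thin, the quantitative inputs you leave open are part of the proof.
\begin{itemize}
\item Step 1 needs the explicit bound of \cite{KS24}, $\area(D)\geq\bigl(\frac\pi4+\frac{\pi^2|\chi|}{96}\frac{\sys(M)^2}{\area(M)}\bigr)\sys(M)^2$. Its quadratic consequence $\sigma(M)\geq\frac\pi8\bigl(1+\sqrt{1+\frac23|\chi|}\bigr)$ gives $\sigma(M)\geq1.07$ for $|\chi|\geq3$. The asymptotic bounds of Gromov and \cite{KS05} settle no specific $\chi$, and your Steps 2--3, written only for $4\RP^2$, would not cover any leftover cases.
\item Step 2 needs the explicit count $n=2e\leq6|\chi|+6=18$, from $\chi=v-e+1$ and $2e\geq3v$. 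With $n=20$ the cone bound at $\sigma_\chi=1$ would already drop to about $1.0003$.
\item There is no free parameter to optimize. Assuming $\sigma(M)<\sigma_\chi$ gives $\area(D_0)\geq\frac\pi4 r^2\sys(M)^2$ and $L\geq\pi r\,\sys(M)$, with $r=\sqrt{1+\pi|\chi|/(24\sigma_\chi)}$. Then
\[
\area(M)\geq\area(D_0)+\area(Q)-\tfrac\theta8\sys(M)^2\geq\bigl(\tfrac\pi4(r^2-r)+\tfrac n4\tan\tfrac{\pi r}{n}\bigr)\sys(M)^2,
\]
a bound increasing in $r$. The constant $\sigma_\chi$ is defined as the fixed point of this expression, which for $\chi=-2$ exceeds $1.002$.
\end{itemize}
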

See Theorem~\ref{theo:B} for a sharper statement.

\medskip

\forget

The proof of Theorem~\ref{theo:main} builds on the existence
of a large-area disk $D\subseteq M$ established in~\cite{KS24}, which
follows from an averaging argument using the invariance of the
Liouville measure under the geodesic flow.  Combined with earlier
systolic inequalities for the genus-two surface~\cite{KS06} and for
Dyck's surface~\cite{KS15}, this result allows us to narrow down the
problem to the case of the surface $4\RP^2$ of Euler
characteristic~$-2$.  Since this surface is homeomorphic to the double
of a Klein bottle, we will refer to it as the \emph{Klein-double}.
For this surface, we are able to obtain a slightly stronger bound than
that of Theorem~\ref{theo:main}.

\begin{theoremABC}
\label{BB}
Let $M$ be a closed surface of negative Euler characteristic with a
nonpositively curved metric.  Then $\sigma(M) > 1.002$.
\end{theoremABC}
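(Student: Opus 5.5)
We argue by cases on the Euler characteristic $\chi(M)<0$, following the three-step strategy described in the introduction. First, we use the large-area disk of~\cite{KS24}. The Liouville averaging argument gives a point $p\in M$ and a radius $r=\tfrac12\sys(M)$ such that the disk $D=B(p,r)$ is embedded. In the Cartan--Hadamard universal cover, $D$ lifts isometrically. Its area satisfies a lower bound that improves with $|\chi(M)|$, since the total area relates to $-2\pi\chi(M)$ plus the integral of the curvature through Gauss--Bonnet.

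Second, we dispose of the surfaces other than the Klein-double. For $\chi(M)\le -3$, the large-disk estimate alone, combined with the growth of the systolic area with $|\chi|$, should give $\sigma(M)$ well above $1.002$; we verify this numerically. For $\chi(M)=-1$, i.e. Dyck's surface, we invoke the optimal bound $\approx 1.15$ of~\cite{KS15}. For $\chi(M)=-2$ orientable, i.e. genus two, we invoke~\cite{KS06}, whose bound exceeds $1.002$. What remains is $M=4\RP^2$.

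Third, for the Klein-double we consider the Voronoi cell $V$ of a lift $\tilde p$ under the deck group action on the universal cover. It is a geodesic polygon containing the lift of $D$, and its area equals $\area(M)$. The edges of $V$ are paired by deck transformations. Euler characteristic counting bounds the number of vertex cycles, hence the number of edges of $V$. The comparison inequality in nonpositive curvature from~\cite{KS06} then bounds from below the area of the regions of $V$ lying outside $D$, namely those between $D$ and each edge at distance $\ge r$. This alone gives roughly the Klein-bottle bound, so it must be improved.

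The improvement uses the new noncontracting exponential-type map. We parametrize the annular region $V\setminus D$ by geodesics issuing normally from $\partial D$, modified to reach the polygon boundary. Nonpositive curvature makes the map length-nondecreasing, so the area of $V\setminus D$ is bounded below by a flat model using the length of $\partial D$. Weil's isoperimetric inequality $L^2\ge 4\pi A$ in the Cartan--Hadamard plane gives $L(\partial D)\ge 2\sqrt{\pi\,\area(D)}$, and the excess area of $D$ forces excess boundary length. Combining the area of $D$ with the boosted area of $V\setminus D$ and optimizing over the free parameter (the area of $D$ in units of $\sys^2$) gives $\sigma>1.002$. The main obstacle is constructing this map so that it stays injective and noncontracting up to the Voronoi boundary, and controlling the corner regions near the vertices of $V$. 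I would first try a piecewise definition, using normal geodesics on the sectors facing edges and radial fans at the vertices.
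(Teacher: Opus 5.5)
Your reduction to the Klein-double matches the paper: the \cite{KS24} disk handles $|\chi|\ge 3$, \cite{KS15} handles $3\RP^2$, and \cite{KS06} handles $\Sigma_2$. Your first Voronoi estimate also matches. However, the step that pushes the bound past $1$ is missing. You list the ingredients (Weil's inequality, a noncontracting map, ``a flat model using the length of $\partial D$'') and then call the construction of the map the unresolved main obstacle.

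On the target side there is nothing to invent. Since $D_0$ is a metric disk, the geodesics normal to $\partial D_0$ are exactly the radial geodesics from $x_0$. These already foliate $\bar M\setminus D_0$, with no corners to fan around. What has to be chosen is the flat model on the domain side, and that is the missing idea.

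Take the flat cone $\mathcal T_\theta$ whose total angle $\theta>2\pi$ is chosen, via Weil's inequality and the disk bound, so that its disk $D_\theta$ of radius $r=\frac12\sys(M)$ has circumference $\theta r\le L$, where $L$ is the length of $\partial D_0$. Define $\Phi_\theta:\mathcal T_\theta\setminus D_\theta\to\bar M\setminus D_0$ in polar-like coordinates:
\begin{itemize}
\item match $\partial D_\theta$ to $\partial D_0$ proportionally to arclength;
\item then send each cone ray isometrically onto the corresponding geodesic ray from $x_0$.
\end{itemize}
The map is isometric radially. Transversally, convexity of the distance function gives $d(\bar\nu_0,\bar\nu_t)\le\frac{r}{r+h}\,d(\bar\eta_0,\bar\eta_t)$, which is an equality in the cone. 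Hence $\Phi_\theta$ is locally noncontracting (Lemma~\ref{lem:noncontracting}). Choosing a cone, rather than for instance a Euclidean annulus of inner radius $L/2\pi$, is what gives $d(x_0,\Phi_\theta(\omega))=d_{\mathcal T_\theta}(\omega_0,\omega)$. That identity is what lets the Voronoi condition be tested against straight bisectors in the model.

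This also removes the difficulty you anticipate at the corners of $V$. You never parametrize $V_0\setminus D_0$ or reach its boundary, whose edges are arcs of bisectors, not geodesics as you wrote. Instead, let $Q\subseteq\mathcal T_\theta$ be the polygon cut out by the bisectors between $\omega_0$ and $\omega_i=\Phi_\theta^{-1}(x_i)$, and show $\Phi_\theta(Q)\subseteq V_0$. Let $y=\Phi_\theta(\omega)$ and take a minimizing arc from $x_i$ to $y$.
\begin{itemize}
\item If the arc avoids $D_0$, its pullback is no longer, so $d(x_0,y)=d(\omega_0,\omega)\le d(\omega_i,\omega)\le d(x_i,y)$.
\item If it meets $D_0\subseteq V_0$, the triangle inequality through a point of $D_0$ gives the same conclusion.
\end{itemize}
Jensen's inequality with total angle $\theta$ then gives $\area(Q)\ge\frac n4\tan(\frac{\theta}{2n})\sys(M)^2$, with $n\le 6|\chi|+6=18$. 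From this you subtract $\area(D_\theta)=\frac\theta8\sys(M)^2$.

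Finally, the area of $D$ is not a free parameter to optimize over. The bound of \cite{KS24} is $\area(D)\ge\big(\frac\pi4+\frac{\pi^2|\chi|}{96}\frac{\sys(M)^2}{\area(M)}\big)\sys(M)^2$, which involves $\sigma(M)$ itself. So one argues by contradiction from $\sigma(M)<\sigma_\chi$ and arrives at the fixed-point equation~\eqref{eq:sigma0}.

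The margin is very thin. The uncorrected constant is $0.99908$, and $\sigma_{-2}$ exceeds $1.002$ only barely. A piecewise parametrization that lost anything near the vertices of $V$ would therefore not close the argument.
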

See Theorem~\ref{theo:B} for a sharper statement.

The general idea is to study the polygonal Voronoi cell concentric
with the large-area disk, control the number of its edges, and then
apply a comparison inequality in nonpositive curvature as
in~\cite{KS06}.  While such an argument alone does not suffice, we
develop a new kind of exponential-type map, and exploit it in
combination with the isoperimetric inequality in the Cartan--Hadamard
plane.  The new exponential-type map is noncontracting, and takes
advantage of the extra boundary length of the large-area disk,
enabling us to obtain sharper estimates on the surrounding area.

\forgotten

We conclude the article by presenting two nonpositively curved
piecewise flat metrics on the Klein-double, exploiting a hyperelliptic
curve of genus $3$ whose equation goes back to Klein's \emph{Lectures
on the Icosahedron}, originally published in 1884 (see \cite[II,
  \S12]{Kl56} where the corresponding homogeneous form is denoted
$W$).  We deduce the following.

\begin{theoremABC}
The minimal systolic area among all nonpositively curved metrics on
the Klein-double lies between~$1.002$ and~$1.37$.
\end{theoremABC}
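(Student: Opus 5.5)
The statement has two halves, and I would prove them separately: the lower bound $1.002$ and the upper bound $1.37$.

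\textbf{Lower bound.} This is immediate from Theorem~\ref{BB}. The Klein-double $4\RP^2$ has Euler characteristic $-2<0$, so every nonpositively curved metric on it satisfies $\sigma>1.002$. The infimum over such metrics is therefore at least $1.002$.

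I also want the infimum to be a minimum, so that ``minimal systolic area'' is justified. For this I would quote the existence of systolically extremal nonpositively curved metrics from~\cite{KS21}. These metrics are flat with finitely many conical singularities, so the extremal value is attained and the strict inequality passes to it. If one only wants the infimum, this step can be skipped.

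\textbf{Upper bound.} This is constructive. Following the announced approach, I would start from Klein's genus~$3$ hyperelliptic curve $X$, given by $y^2=W(x)$ with $W$ the degree-$8$ form from \cite[II, \S12]{Kl56}. Its branch points are highly symmetric, namely the vertices of an octahedron on the Riemann sphere. I would choose an antiholomorphic fixed-point-free involution $\tau$ of $X$, which should exist by this symmetry, for example the antipodal map lifted appropriately. The quotient $X/\tau$ is then nonorientable with Euler characteristic $-4/2=-2$, so it is the Klein-double.

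On $X/\tau$ I would put a $\tau$-invariant piecewise flat metric with conical singularities of total angle at least $2\pi$. One natural choice is the flat metric $|\omega|$ for a holomorphic $1$-form $\omega$ with $\tau^*\omega=\bar\omega$. Another is the pullback of a flat metric on a polyhedral model of the sphere, such as the octahedron or cube surface, branched at the Weierstrass points; this gives cone angles $4\pi$ at the branch points and nonpositive curvature automatically. I would compute both candidates and keep the better one.

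For each candidate I would do three things:
\begin{enumerate}
\item Compute the area exactly, in closed form from the polyhedral data.
\item Compute the systole. In a flat metric with cone points, every systolic loop is a closed geodesic made of straight segments meeting singular points with angle at least $\pi$ on each side. So it suffices to enumerate saddle connections and flat cylinder core curves up to a bounded length, using the symmetry group to reduce the cases.
\item Check that the ratio is at most $1.37$.
\end{enumerate}

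\textbf{Main obstacle.} The hardest step is certifying the systole, i.e.\ showing that no noncontractible loop is shorter than the claimed value. This includes one-sided loops, which are specific to the nonorientable quotient. I would lift to the orientable double cover $X$. A loop in $X/\tau$ that is noncontractible either lifts to a closed loop in $X$ or to a path from a point $p$ to $\tau(p)$. The first case reduces to the systole of $X$. For the second I need a lower bound on the displacement $\inf_p d(p,\tau(p))$. Both can be computed on an explicit fundamental polygon by a finite check of straight-line segments joining cone points.
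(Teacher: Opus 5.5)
\textbf{Lower bound.} This half is fine and is exactly the paper's argument: it is Theorem~\ref{BB} (Theorem~\ref{theo:B} with $\chi=-2$) applied to $4\RP^2$.

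\textbf{Upper bound.} This is the substantive half, and your proposal does not prove it. You describe a search (``compute both candidates and keep the better one'') but never exhibit a metric with its area and certified systole. Moreover, the candidates you name do not reach $1.37$.

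\emph{Octahedron.} This model is not admissible. The eight Weierstrass points are the face centers of the octahedron, so its six vertices (cone angle $4\pi/3$) are not branch points. They lift to twelve points of positive curvature. Nonpositive curvature is not automatic: it requires every cone point of the polyhedral sphere to be a branch point of angle at least $\pi$.

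\emph{Cube.} This model is admissible, with cone angles $3\pi$ rather than $4\pi$, but it is too fat. With edge length $a$, the quotient $X/\tau$ has area $6a^2$. The preimage in $X$ of an edge is a closed local geodesic of length $2a$: it makes angle $3\pi/2$ on each side at its two cone points. It is therefore noncontractible, and so is its image in $X/\tau$. Hence $\sigma\geq 6a^2/(2a)^2=3/2>1.37$.

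\emph{The $|\omega|$ option.} You leave $\omega$ unspecified. For the most symmetric invariant choice, $\omega=x\,dx/y$, the form is pulled back from the elliptic curve $w^2=u^4-14u^2+1$ via $u=x^2$. Then the straight segment from the zero $(0,1)$ of $\omega$ to its $\tau$-image already gives a one-sided loop. Relative to the area, this forces $\sigma>1.9$ for either fixed-point-free lift $\tau$ of the antipodal map. So finishing your plan is not a routine verification.

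\textbf{What is missing.} You need a specific metric tuned so that several families of loops are simultaneously systolic. The paper does not use Klein's conformal structure on a quotient $X/\tau$. It keeps only the combinatorics of $\Sigma_3$:
\begin{enumerate}
\item The lift $\Sigma_{1,2}$ of the northern hemisphere of the octahedral decomposition is built from four copies of the non-regular inscribed hexagon $H$. Its parameters $\theta=2\arccos\bigl(\frac{1+\sqrt{33}}{8}\bigr)$ and $h$ are forced by requiring the isosceles triangles to have equal legs.
\item A flat cylinder of circumference $1$ and height $1-2h$ is inserted, with its ends glued via the hyperelliptic involution $\iota$. This gluing is what makes the surface nonorientable.
\item The systole is shown to be $1$ by separating loops meeting the cylinder from loops inside $\Sigma_{1,2}$. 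The latter are handled using that $\iota$ acts as $-1$ on homology, together with the flat strip theorem.
\end{enumerate}
This gives $\sigma=4\area(H)+1-2h=1+\frac{1}{16}\sqrt{414-66\sqrt{33}}\approx 1.369$. Its cone angles $4(\pi-\theta)$ and $2\pi+\theta$ are not multiples of $2\pi$, and are neither $3\pi$ nor $4\pi$. So this metric is neither of the form $|\omega|$ nor a pullback of the regular cube or octahedron, i.e.\ it lies outside the families you proposed to search.
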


See Proposition~\ref{p51} for further details. 

\medskip

\noindent {\bf Note.} 

After the galley proofs were returned to the journal, the
preprint~\cite{CV} appeared on arXiv, leading to sharper systolic
inequalities without any curvature assumption.  In particular, the
systolic area of every surface of negative Euler characteristic is
greater than 1 and every genus $g\geq1$ surface is Loewner.

\section{Narrowing down the Euler characteristic} \label{sec:two}



Let $M$ be a connected closed surface with a nonpositively curved
metric, which is not homeomorphic to the torus~$\T^2$ or the Klein
bottle~$\K^2$.  Thus, the Euler characteristic~$\chi(M)$ of~$M$ is
negative.  In~\cite{KS24}, we proved that every nonpositively curved
closed surface admits a suitable disk $D \subseteq M$ of
radius~$\frac{1}{2} \sys(M)$ such that
\begin{equation} \label{eq:D}
\area(D) \geq \left( \frac{\pi}{4} + \frac{\pi^2 |\chi(M)|}{96} \, \frac{\sys(M)^2}{\area(M)} \right) \sys(M)^2.
\end{equation}
Using the bound $\area(M) \geq \area(D)$, this yields the quadratic
relation
\[
\sigma(M)^2 - \frac{\pi}{4} \, \sigma(M) - \frac{\pi^2 |\chi(M)|}{96} \geq 0
\]
for the systolic area~$\sigma(M)$ of~$M$.
Hence,
\begin{equation} \label{eq:chi}
\sigma(M) \geq \frac{\pi}{8} + \frac{\pi}{8} \, \sqrt{1 + \frac{2}{3} \, | \chi(M) |}.
\end{equation}
When $|\chi(M)| \geq 3$, we deduce the lower bound
\begin{equation} \label{eq:8}
\sigma(M) \geq \frac{\pi}{8} (1+\sqrt{3}) \simeq 1.07
\end{equation}
as desired.  For the remaining values of the Euler characteristic,
namely $\chi(M)=-1$ or $-2$, the inequality~\eqref{eq:chi} does not
provide a sufficient lower bound on the systolic area.  These values
of the Euler characteristic correspond to the genus two
surface~$\Sigma_2$, Dyck's surface which is the connected
sum~$3\RP^2$, and the Klein-double~$4\RP^2$.  For the genus two
surface and Dyck's surface, we established optimal systolic
inequalities among nonpositively curved metrics in~\cite{KS06}
and~\cite{KS15}.  Namely,
\[
\sigma(\Sigma_2) \geq 3 (\sqrt{2}-1) \simeq 1.24
\]
and
\[
\sigma(3 \RP^2) \geq 1 + \frac{(169-38\sqrt{19})^{1/2}}{12} \simeq
1.15.
\]
It follows that all surfaces~$M$, except possibly for the
Klein-double, have a systolic area greater than~$1.07$.  Thus, it
remains to treat the case of the Klein-double of Euler
characteristic~$-2$.

\medskip

In this case, we can prove a stronger systolic inequality than Theorem~\ref{theo:main}, valid for all surfaces of negative Euler characteristic.

\begin{theorem} \label{theo:B}
A closed surface $M$ of negative Euler characteristic~$\chi$ with a
nonpositively curved metric satisfies
\[
\sigma(M) \geq \sigma_\chi
\]
where $\sigma_\chi > 1.002$ is the solution of the equation~\eqref{eq:sigma0}.
\end{theorem}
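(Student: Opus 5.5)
We follow the three-step strategy outlined in the introduction. Normalize $\sys(M)=1$. Let $D\subseteq M$ be the disk of radius $\frac12$ centered at a point $x_0$ given by~\cite{KS24}, satisfying~\eqref{eq:D}. Since the injectivity radius is at least $\frac12$ in nonpositive curvature with $\sys=1$, $D$ is embedded and $\area(D)\ge \pi/4$. The goal is to enlarge this area estimate by the area of a region of $M$ lying outside~$D$, without double counting. We then combine it with~\eqref{eq:D} to get a quadratic-type relation in $\sigma(M)$ whose root is $\sigma_\chi$; this is how~\eqref{eq:sigma0} should arise.

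\textbf{Step 1 (Voronoi cell).} Lift to the universal cover $\widetilde M$, a Cartan--Hadamard plane. Let $\Gamma$ be the deck group and $\tilde x_0$ a lift of $x_0$. Consider the Dirichlet--Voronoi cell
\[
V=\{y: d(y,\tilde x_0)\le d(y,\gamma\tilde x_0)\ \forall\gamma\in\Gamma\}.
\]
It is a fundamental domain, so $\area(M)=\area(V)$. It contains the disk $\tilde D$ of radius $\frac12$. Its edges lie on bisectors of $\tilde x_0$ and $\gamma\tilde x_0$ with $d(\tilde x_0,\gamma \tilde x_0)\ge 1$. A Gauss--Bonnet count on $V$ (vertices have degree $\ge 3$ and the curvature is nonpositive), combined with $\chi(M)<0$, bounds the number $n$ of edges from below (e.g.\ $n\ge 6$, and larger in the relevant case $\chi=-2$). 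By the comparison argument of~\cite{KS06}, the region of $V$ between $\tilde D$ and each edge has area at least that of the corresponding flat sector region. This gives $\area(V)\ge \area(D)+c_n$ for an explicit $c_n$.

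\textbf{Step 2 (new exponential-type map and Weil).} The comparison of Step~1 alone falls just short of $1$, so we must exploit the excess $\area(D)-\pi/4$ guaranteed by~\eqref{eq:D}. By Weil's isoperimetric inequality in the Cartan--Hadamard plane, $L(\partial D)^2\ge 4\pi\,\area(D)$, so $\partial D$ is at least as long as a Euclidean circle enclosing the same area. We then define a map from a Euclidean annulus-type model region, built over a circle of length $L(\partial D)$, into $V\setminus \tilde D$, by flowing from $\partial\tilde D$ along outward geodesics normal to the distance spheres. We show this map is noncontracting, using the Jacobi equation with $K\le 0$: normal Jacobi fields grow at least linearly. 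Its image is then contained in $V\setminus\tilde D$ up to the bisector edges. The model region is cut off by straight lines at the flat comparison distances. Its area is then an explicit function $F(\area(D))$, increasing in $\area(D)$, and we get
\[
\sigma(M)\ \ge\ \area(D)+F(\area(D)).
\]

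\textbf{Step 3 (conclusion).} Insert the lower bound~\eqref{eq:D}, $\area(D)\ge \frac{\pi}{4}+\frac{\pi^2|\chi|}{96\,\sigma(M)}$, into this inequality. Monotonicity yields an inequality $\sigma\ge G_\chi(\sigma)$. Since $G_\chi$ is decreasing in $\sigma$, we conclude $\sigma(M)\ge\sigma_\chi$, where $\sigma_\chi$ is the fixed point, i.e.\ the solution of~\eqref{eq:sigma0}. The value decreases in $|\chi|$ relative to the bound~\eqref{eq:chi}, so the worst case is $\chi=-1$, where a numerical evaluation gives $\sigma_\chi>1.002$.

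The main obstacle is Step~2: constructing the exponential-type map so that it is simultaneously noncontracting and lands injectively inside the Voronoi cell. Normal geodesics from $\partial\tilde D$ need not stay inside $V$ or remain disjoint when $\partial D$ is not a distance sphere with convex boundary. I would first try defining the map via nearest-point projection onto the convex hull of $\tilde D$, which is $1$-Lipschitz in Cartan--Hadamard spaces. Inverting its role then gives noncontraction, and convexity of $V$ gives containment.
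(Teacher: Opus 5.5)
Your outline has the right architecture (Voronoi cell, Weil's inequality, a radial exponential-type map outside $D$, then a fixed-point equation), but two load-bearing steps are wrong. First, your combinatorial bound goes in the wrong direction. The comparison area of a polygon circumscribing the disk is $\frac{n}{4}\tan\frac{\pi}{n}$ in the flat case and $\frac{n}{4}\tan\frac{\theta}{2n}$ on the cone. Both \emph{decrease} as the number $n$ of sides grows, tending to the area of the disk. So a lower bound on $n$ gives no positive lower bound for your $c_n$. What is needed is the upper bound $n=2e\le 6|\chi|+6$ of Lemma~\ref{l31}. The projection of $\partial V$ is the $1$-skeleton of a cell decomposition of $M$ with a single $2$-cell, so $\chi=v-e+1$. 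Every vertex has degree at least $3$, so $2e\ge 3v$. This is exactly where the quantity $6|\chi|+6$ in~\eqref{eq:sigma0} comes from; no Gauss--Bonnet count is involved.

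Second, containment of the image of the model region in $V$ is the real content of Step~2, and your proposed mechanism fails. A Voronoi cell in a Cartan--Hadamard plane is star-shaped about $\tilde x_0$ but need not be convex, since its edges are arcs of equidistant curves rather than geodesics. In any case, convexity would not compare model distances to $\omega_i$ with true distances to $x_i$. Injectivity is not the issue either: $\partial\tilde D$ is a distance circle, its normal geodesics are the disjoint radial geodesics from $\tilde x_0$, and nearest-point projection onto this convex ball is just radial projection.

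What the paper uses is a model that preserves distance to the center. Take the flat cone $\mathcal{T}_\theta$ of angle $\theta=2\pi r$, with $r=\sqrt{1+\pi|\chi|/(24\sigma)}$, so that its circle of radius $\frac12\sys$ has length at most $L$. Send the point with polar-like coordinates $(h,\nu)$ to the point at distance $h$ beyond $\bar\nu$ on the ray from $x_0$. Then $d(x_0,\Phi_\theta(\omega))=d_{\mathcal{T}_\theta}(\omega_0,\omega)$. A Euclidean annulus of inner radius $L/2\pi$ loses this identity and does not lead to~\eqref{eq:sigma0}. Local noncontraction follows from the length-nondecreasing boundary map together with convexity of the distance function; your Jacobi-field remark works if read as $|J(t)|/t$ nondecreasing.

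The missing key lemma is then a two-case argument. Let $\omega$ lie in the polygon $Q$ cut out by the cone-bisectors of $\omega_0$ and $\omega_i=\Phi_\theta^{-1}(x_i)$, and set $y=\Phi_\theta(\omega)$. If a minimizing arc from $x_i$ to $y$ avoids $D_0$, pull it back by $\Phi_\theta^{-1}$ to get $d(x_0,y)=d_{\mathcal{T}_\theta}(\omega_0,\omega)\le d_{\mathcal{T}_\theta}(\omega_i,\omega)\le d(x_i,y)$. If it meets $D_0\subseteq V_0$ at a point $z$, then $d(x_0,y)\le d(x_0,z)+d(z,y)\le d(x_i,z)+d(z,y)=d(x_i,y)$.

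Finally, your numerical claim at $\chi=-1$ is false. There $n\le 12$, and the root of~\eqref{eq:sigma0} is about $0.92$. The value $\sigma_\chi$ increases with $|\chi|$ and exceeds $1.002$ only for $|\chi|\ge 2$, with $\chi=-2$ the extremal case. The surfaces with $\chi=-1$ are handled separately in Section~\ref{sec:two} through~\cite{KS06} and~\cite{KS15}.
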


The two following sections are devoted to a proof of this
result.


\section{Voronoi cell and area lower bound}

We argue by contradiction.  Specifically, we can assume that
$\sigma(M) < \sigma_\chi$, where $\sigma_\chi > 1.002$ is the solution
of the equation~\eqref{eq:sigma0}.

For the purpose of the inequality of Theorem~\ref{theo:main}, it is
enough to prove Theorem~\ref{theo:B} for the Klein-double using the
value $1$ in place of $\sigma_\chi$.

\subsection{Voronoi cell centered around~$D$}

\mbox{ } \medskip

\forget
From now on, we will argue by contradiction.
Specifically, we can assume that $M=4 \RP^2$ and that $\sigma(M) \leq \sigma_0$, where 
\[
\sigma_0=\frac{\pi}{8}(1+\sqrt{3})=1.07...
\]
If we are only interested in the first part of the theorem, we can take~$\sigma_0=1$.
\forgotten

By~\eqref{eq:D}, the large-area disk~$D \subseteq M$ provided by~\cite{KS24} satisfies

\begin{equation} \label{eq:48}
\area(D) > \left( \frac{\pi}{4} + \frac{\pi^2 |\chi|}{96 \,
  \sigma_\chi} \right) \, \sys(M)^2.
\end{equation}
Note that when $M$ is the Klein-double and $\sigma_\chi$ is replaced
by $1$, the multiplicative constant $\frac{\pi}{4} + \frac{\pi^2}{48}
\simeq 0.991$ falls short of $1$ by less than $1\%$.

Now, the idea is to seek a lower bound for the area of the region
of~$M$ outside the disk~$D$.  Let $D_i \subseteq \bar{M},\, i \in \N$
be the lifts of~$D$ to the universal cover~$\bar{M}$ of~$M$.  Denote
by~$x_i$ the center of~$D_i$.  Consider the Voronoi cell~$V_i$
around~$x_i$ defined by
\[
V_i = \{ x \in \bar{M} \mid d(x,x_i) \leq d(x,x_j) \textrm{ for every } j \neq i \}.
\]

Such a Voronoi cell on~$\bar{M}$ is a star-shaped polygon whose edges are arcs of the equidistant curves between a pair of points $\{x_i,x_j\}$.
Note that the edges are not necessarily geodesics.
Each Voronoi cell~$V$ is a fundamental domain of~$M$ and its edges are pairwise identified in~$M$ under the projection~$\pi:\bar{M} \to M$.
This yields a cellular decomposition of~$M$ whose $1$-skeleton coincides with the projection of the boundary~$\partial V$ of~$V$.
The Euler characteristic of~$M$ can be expressed in terms of the numbers~$e$ and~$v$ of edges and vertices of the cellular decomposition.
Specifically,
\[
\chi= v-e+1.
\]
Combined with the classical inequality $2e \geq 3v$, we deduce the following.

\begin{lemma}
\label{l31}
The number~$n$ of edges of the Voronoi cell\,~$V$ satisfies
\[
n=2e \leq 6 |\chi| +6.
\]
\end{lemma}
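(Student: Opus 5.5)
The plan is to combine the Euler characteristic formula for the cellular decomposition induced by the Voronoi cell with a lower bound on vertex valences. First I will justify the structure of the decomposition. The Voronoi cell $V$ is a fundamental domain whose interior projects injectively onto $M$. Its boundary edges are identified in pairs under $\pi$, since each edge of $V$ lies on the bisector between $x_i$ and some $x_j=\gamma x_i$, and the deck transformation $\gamma^{-1}$ carries it to an edge of $V$. So the cellular decomposition of $M$ has exactly one $2$-cell, $e$ edges and $v$ vertices. This gives $\chi = v - e + 1$, and the number $n$ of edges of $V$ satisfies $n = 2e$.

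Next I will show $2e \geq 3v$, that is, that every vertex of the decomposition has valence at least~$3$. Counting edge endpoints gives $2e = \sum_{w} \deg(w)$, where the sum runs over the vertices of the $1$-skeleton. A vertex of valence~$1$ cannot occur, because near such a point the cell would lie on both sides of a single bisector arc, which is impossible for a Voronoi cell. Valence~$2$ points are not genuine vertices. At such a point only two Voronoi cells meet, so the point lies in the interior of a single equidistant arc between one pair of centers, and we simply do not count it as a vertex. This matches the statement that edges are arcs of equidistant curves between a pair of points. A genuine vertex is a point equidistant from at least three centers, so at least three cells and hence at least three edges meet there. Therefore $\deg(w)\ge 3$ for every vertex, and $2e \ge 3v$.

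Finally I combine the two relations. From $v \le \tfrac{2}{3}e$ we get $\chi = v - e + 1 \le 1 - \tfrac{e}{3}$. Hence $e \le 3(1-\chi) = 3|\chi| + 3$, using $\chi<0$. This yields $n = 2e \le 6|\chi| + 6$.

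The main obstacle is the careful justification of the cell structure. One must check that the edges of $V$ really are pairwise identified, including the degenerate case where $\gamma$ is an orientation-reversing deck transformation or an involution-like pairing fixes an edge's midpoint. In that case an edge could be glued to itself with a reversal. To handle it, I would first check that in nonpositive curvature the universal cover is Cartan--Hadamard and the deck group acts freely. A self-glued edge would force $\gamma$ to fix its midpoint, which is impossible. So every edge of $V$ is paired with a distinct edge, and the count $n = 2e$ holds.
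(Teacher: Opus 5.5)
Your proof is correct and follows the paper's argument: the single $2$-cell gives $\chi=v-e+1$, the pairing of the sides of $V$ gives $n=2e$, valence at least three gives $2e\ge 3v$, and combining these yields $e\le 3|\chi|+3$. The paper states these facts without proof, calling $2e\ge 3v$ classical, so your justification of the side pairing via the free deck action (where $x_i$ should read $x_0$) and of the valence bound fills in details the paper omits rather than taking a different route.
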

This upper bound on $n$ yields a lower bound for the area of the
region outside $D$.

\subsection{Euclidean tangent polygon~$P$} \label{sec:P}

\mbox{ } \medskip

This section and the next describe an attempt to derive the desired area lower bound, building on an approach first introduced in~\cite{KS06}.
While this initial attempt falls short of the desired bound, the subsequent sections show how to adapt the method to achieve the desired result. 

\medskip

Renumbering the indices if necessary, we can assume that
$V_1,\dots,V_n$ are the $n$ Voronoi cells sharing an edge with~$V_0$.
Let $\mathcal{T}_E= T_{x_0} \bar{M}$.  The subscript~$E$ is here to
emphasize that the tangent plane is endowed with a Euclidean metric.
Denote by~$\omega_i \in \mathcal{T}_E$ the preimage of~$x_i$ under the
exponential map $\exp_{\mathcal{T}_E}^{\phantom{I^I_I}}: \mathcal{T}_E
\to \bar{M}$ with $1 \leq i \leq n$.  Note that $\omega_0$ is the
origin of~$\mathcal{T}_E$.  The equidistant line in~$\mathcal{T}_E$
between~$\omega_0$ and~$\omega_i$ is denoted by
\[
\Delta_i \subseteq \mathcal{T}_E.
\]

Let $P \subseteq \mathcal{T}_E$ be the Euclidean polygon formed by the
intersection of the halfplanes containing the origin~$\omega_0$ and
bounded by the lines~$\Delta_i$ with $1 \leq i \leq n$.  Consider a
point~$\omega \in P$ and its image
$y=\exp_{\mathcal{T}_E}^{\phantom{I^I_I}}(\omega)$ in~$\bar{M}$.  By
  Alexandrov's comparison inequality~\cite[\S II]{BH},
the exponential map does not decrease distances on a nonpositively curved plane.
Therefore,
\[
d_{\bar{M}}(x_0,y) = d_{\mathcal{T}_E}(\omega_0,\omega) \leq d_{\mathcal{T}_E}(\omega_i,\omega) \leq d_{\bar{M}}(x_i,y).
\]
It follows that the exponential image of~$P$ is contained in the Voronoi cell~$V_0$.
By Gauss' lemma, we also have that the exponential image of the disk $D(\omega_0,\frac{1}{2} \sys(M)) \subseteq \mathcal{T}_E$ coincides with~$D_0$.
Since the exponential map is distance nondecreasing, and thus area nondecreasing, we have
\begin{equation} \label{eq:V0-D0}
\area(V_0 \setminus D_0) \geq \area\big(P \setminus D(\omega_0,\tfrac{1}{2} \sys(M))\big) \geq \area(P) - \frac{\pi}{4} \sys(M)^2.
\end{equation}

\subsection{Area lower bound on~$P$} \label{sec:PP}

\mbox{ } \medskip

The polygon~$P$ is a compact Euclidean polygon with $k$ sides, which can be decomposed into $k$ triangles over these $k$ sides with angle~$\theta_j$ at~$\omega_0$.
Since $d_{\mathcal{T}_E}(\omega_0,\omega_i) = d_{\bar{M}}(x_0,x_i) \geq \sys(M)$, the area of each of these triangles is bounded from below by
\begin{equation} \label{eq:tan}
\left( \frac{\sys(M)}{2} \right)^2 \tan \left( \frac{\theta_j}{2} \right).
\end{equation}
By Jensen's inequality applied to the convex function $\tan(\frac{x}{2})$ when $0<x<\pi$, we deduce that
\begin{equation} \label{eq:P}
\area(P) \geq \frac{1}{4}  \sum_{j=1}^k \tan \left( \frac{\theta_j}{2} \right) \sys(M)^2 \geq \frac{k}{4}  \tan \left( \frac{\pi}{k} \right)  \sys(M)^2.
\end{equation}
It follows from~\eqref{eq:48}, \eqref{eq:V0-D0}, \eqref{eq:P} and the
bound $k \leq n \leq 6|\chi|+6$ of Lemma~\ref{l31} that
\begin{align*}
\area(M) & = \area(D_0) + \area(V_0 \setminus D_0) \\
 & \geq \left( \frac{\pi^2 |\chi|}{96 \, \sigma_\chi} + \frac{3}{2} (|\chi|+1) \tan \frac{\pi}{6|\chi|+6} \right) \sys(M)^2.
\end{align*}
This is still not enough to conclude when~$M$ is the Klein-double even
when~$\sigma_\chi$ is replaced by $1$, since
the multiplicative constant equals $\frac{\pi^2}{48} + \frac{9}{2}
\tan \left(\frac{\pi}{18}\right) \simeq 0.99908$, which falls short of
$1$ by less than~$1\text{\textperthousand}$.  We develop a sharper
estimate in Section~\ref{s4}.


\section{Isoperimetric inequality and conical tangent plane}
\label{s4}


We continue with the proof of Theorem~\ref{theo:B}, keeping the same
notation.  The idea at this stage is to use the isoperimetric
inequality to show that the circumference of the large-area disk~$D$
exceeds that of the Euclidean disk of the same radius, thereby
yielding extra area around~$D$.

\medskip

By Weil's isoperimetric inequality on a Cartan--Hadamard plane~\cite[Theorem~2.3]{rit23}, the circumference~$L$ of~$D_0 \subseteq \bar{M}$ satisfies 
\[
L^2 \geq 4\pi \, \area(D_0).
\]
Combined with~\eqref{eq:48}, this implies the lower bound
\[
L \geq \pi \sqrt{1+\frac{\pi |\chi|}{24 \, \sigma_\chi}} \, \sys(M).
\]

Let us show how to improve the lower bound on the area of~$V_0 \setminus D_0$ given by~\eqref{eq:V0-D0} and~\eqref{eq:P} using the lower bound on~$L$.
Denote by~$\mathcal{T}_\theta$ the tangent plane~$T_{x_0} \bar{M}$ endowed with the flat metric with a conical singularity of angle
\begin{equation} \label{eq:theta}
\theta = 2 \pi \sqrt{1+\frac{\pi |\chi|}{24 \, \sigma_\chi}} > 2\pi
\end{equation}
at its origin~$\omega_0$.

With this choice of angle, the circumference of the disk $D_\theta \subseteq \mathcal{T}_\theta$ of radius~$\frac{1}{2} \sys(M)$ centered at~$\omega_0$ is bounded by~$L$.
Note also that
\begin{equation} \label{eq:D_theta}
\area(D_\theta) = \frac{\theta}{8} \sys(M)^2. 
\end{equation}

\subsection{Exponential-type map}

\mbox{ } \medskip

We will need a modified version of the exponential map, in which the polar coordinates in the tangent plane~$\mathcal{T}$ are replaced by analogous coordinates in the conical tangent plane with a disk removed~$\mathcal{T}_\theta \setminus D_\theta$.

\medskip

Since the circle~$\partial D_\theta$ is no longer than~$\partial D_0$, there exists a length-nondecreasing identification given by a reparametrization proportional to arclength
\[
  \begin{aligned}
    \Phi_\theta : \partial D_\theta & \longrightarrow \partial D_0 \\
    \nu & \longmapsto \bar{\nu}.
  \end{aligned}
\]
Let us extend this map into an exponential-type map supported
outside~$D_0$ (and not around~$x_0$)
\[
\Phi_\theta: \mathcal{T}_\theta \setminus D_\theta \to \bar{M} \setminus D_0
\]
as follows.
Given a vector $\eta \in \mathcal{T}_\theta \setminus D_\theta$, denote by~$h$ the distance between~$\eta$ and its radial projection~$\nu$ to the circle~$\partial D_\theta$.
We will refer to $(h,\nu)$ as the polar-like coordinates of~$\eta$.
Its image~$\Phi_\theta(\eta)$, also denoted~$\bar{\eta}$, is defined as the point on the ray~$[\omega_0,\nu)$ at distance~$h$ from~$\bar{\nu}$.

\begin{lemma} \label{lem:noncontracting}
The map $\Phi_\theta: \mathcal{T}_\theta \setminus D_\theta \to \bar{M} \setminus D_0$ is locally distance-nondecreasing.
\end{lemma}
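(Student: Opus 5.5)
We compare the length element of the pulled-back metric with the flat conical metric in polar-like coordinates and reduce the claim to two one-dimensional inequalities, one radial and one tangential. On $\mathcal{T}_\theta \setminus D_\theta$, write points as $(h,s)$, where $h \geq 0$ is the distance to $\partial D_\theta$ and $s \in [0,\theta r)$ is arclength along $\partial D_\theta$, with $r=\frac12\sys(M)$. The flat conical metric is then
\[
dh^2 + \Bigl(1+\frac{h}{r}\Bigr)^2 ds^2 .
\]
On $\bar M$, use geodesic polar coordinates $(\rho,\phi)$ centered at $x_0$, where $\phi \in [0,2\pi)$ is the Euclidean angle in $T_{x_0}\bar M$. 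By Gauss' lemma the metric has the form $d\rho^2 + J(\rho,\phi)^2\,d\phi^2$, with $J$ the norm of the Jacobi field along the radial geodesic satisfying $J(0)=0$, $J'(0)=1$. Since the Cartan--Hadamard plane has no conjugate points, $\exp_{x_0}$ is a diffeomorphism and these coordinates are global off $x_0$. The map $\Phi_\theta$ reads $(h,s)\mapsto (\rho,\phi)=(r+h,\phi(s))$, where $\phi(s)$ is defined by $\Phi_\theta$ on the circles. The reparametrization is proportional to arclength, so $\phi$ is a smooth increasing function with $J(r,\phi(s))\,\phi'(s) = L/(\theta r) \geq 1$, using $L \geq \theta r$.

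The pulled-back metric is
\[
dh^2 + J(r+h,\phi(s))^2\phi'(s)^2\,ds^2 ,
\]
which has no cross term. The radial directions therefore match isometrically, and local distance-nondecrease is equivalent to the pointwise inequality
\[
J(r+h,\phi)\,\phi' \;\geq\; \Bigl(1+\frac{h}{r}\Bigr) \quad\text{for all } h\geq 0 .
\]
At $h=0$ this is the inequality $J(r,\phi)\phi' \geq 1$ just noted.

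The key step, and the main obstacle, is propagating this inequality in $h$. In curvature $K \leq 0$, the Jacobi equation $J''=-KJ \geq 0$ makes $J$ convex in $\rho$. I will use the stronger ratio monotonicity: $J'/J \geq 1/\rho$, equivalently $J/\rho$ is nondecreasing. This follows from Sturm comparison with the flat model $J_0(\rho)=\rho$. Indeed, $(J'\rho - J)' = J''\rho \geq 0$ and $(J'\rho-J)(0)=0$, so $J'\rho \geq J$. Hence
\[
J(r+h) \geq J(r)\,\frac{r+h}{r},
\]
and multiplying by $\phi'$ gives
\[
J(r+h,\phi)\phi' \geq J(r,\phi)\phi' \Bigl(1+\frac hr\Bigr) \geq 1+\frac hr .
\]
This is exactly the required inequality.

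It remains to handle the quadratic form. For a tangent vector $(\dot h,\dot s)$, the pulled-back norm squared is $\dot h^2 + J^2\phi'^2\dot s^2$, which is at least $\dot h^2+(1+h/r)^2\dot s^2$, the model norm squared. So $d\Phi_\theta$ is norm-nondecreasing at every point, including along $h=0$ from the outside. Integrating along curves, the length of any curve does not decrease under $\Phi_\theta$, which gives local distance-nondecrease: in small convex neighborhoods of the flat cone minus the disk, distance is realized by curve length.

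Two details need care. One is smoothness of $\phi$: $\Phi_\theta$ on $\partial D_\theta$ is defined via arclength of the smooth curve $\partial D_0$, so $\phi$ is $C^1$ provided $J(r,\cdot)>0$, which holds since there are no conjugate points. The other is the surjectivity and well-definedness of the ray construction, which follows from $\exp_{x_0}$ being a diffeomorphism.
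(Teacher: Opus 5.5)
Your proof is correct and follows essentially the paper's route: Gauss' lemma makes $\Phi_\theta$ isometric along rays, and the tangential stretch factor splits into the boundary factor $L/(\theta r) \geq 1$ times a propagation factor, which is compared with the exact cone factor $(r+h)/r$ given by Thales' theorem. Where the paper invokes convexity of the distance between two geodesics issuing from $x_0$ in nonpositive curvature, you prove its infinitesimal form ($J/\rho$ nondecreasing) directly from the Jacobi equation; your closing remark about small convex neighborhoods does not apply at points of $\partial D_\theta$, but the pointwise bound $\lVert d\Phi_\theta(\mu)\rVert \geq \lVert\mu\rVert$ (hence nondecreasing curve lengths) is exactly what the paper proves and later uses.
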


\begin{proof}
By construction, the map~$\Phi_\theta$ is distance-preserving along rays arising from the origin of~$\mathcal{T}_\theta$.
By Gauss' lemma, we only need to measure the dilatation effect of the differential of~$\Phi_\theta$ on the vectors~$\mu$ orthogonal to these rays.

Let $\mu$ be a vector with basepoint~$\eta_0 \in \mathcal{T}_\theta$ tangent to a ray arising from the origin of~$\mathcal{T}_\theta$.
Denote by~$(h_0,\nu_0)$ the polar-like coordinate of~$\eta_0$.
Consider a one-parameter family of vectors~$\eta_t$ with polar-like coordinate~$(h_0,\nu_t)$ such that $\frac{d\nu_t}{dt}_{|t=0}=\mu$.
Denote by~$\bar{\eta}_t$ the $\Phi_\theta$-image of~$\eta_t$.

Recall that $\nu_t$ and $\eta_t$ are at distance~$\frac{1}{2} \sys(M)$ and~$\frac{1}{2} \sys(M)+h$ from the vertex of~$\mathcal{T}_\theta$. 
By Thales' theorem, this yields
\begin{equation} \label{eq:conv1}
d_{\mathcal{T}_\theta}^{\phantom{I}}(\nu_0, \nu_t) = \frac{\tfrac{1}{2}
  \sys}{\tfrac{1}{2} \sys + h} \, d_{\mathcal{T}_\theta}^{\phantom{I}}(\eta_0,
\eta_t).
\end{equation}

Recall also that $\bar{\nu}_t$ and $\bar{\eta}_t$ are at distance respectively~$\frac{1}{2} \sys(M)$ and~$\frac{1}{2} \sys(M)+h$ from~$x_0$.
Thus, by convexity of the distance function in complete nonpositively curved space (see~\cite[\S II.2.2]{BH} and Figure~\ref{fig:div}),
we derive
\begin{equation} \label{eq:conv2}
d_{\bar{M}}(\bar{\nu}_0, \bar{\nu}_t) \leq \frac{\tfrac{1}{2} \sys } {\tfrac{1}{2} \sys + h}\, d_{\bar{M}}(\bar{\eta}_0, \bar{\eta}_t).
\end{equation}

\begin{figure}[htbp!] 
\vspace{2.2cm}
\def\svgwidth{3.5cm}
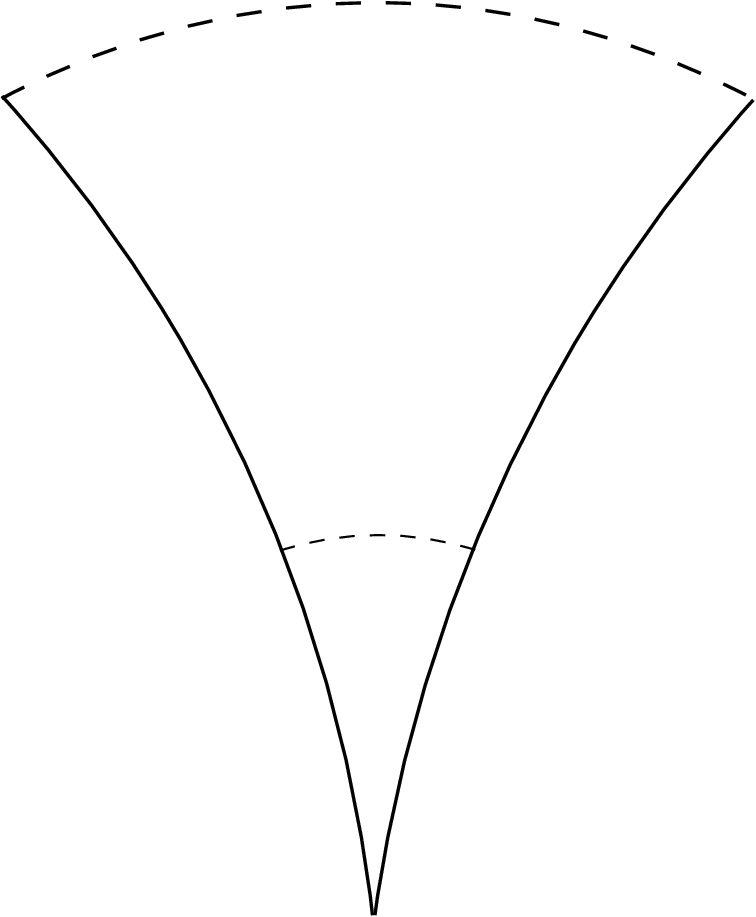 
\vspace{0.3cm}
\caption{Geodesic divergence} \label{fig:div}
\end{figure}

By construction, the exponential-type map is length-nondecreasing on~$\partial D_\theta$.
Thus, $d_{\mathcal{T}_\theta}(\nu_0, \nu_t) \leq d_{\bar{M}}(\bar{\nu}_0, \bar{\nu}_t)+o(t)$.
Combined with the relations~\eqref{eq:conv1} and~\eqref{eq:conv2}, this yields
\[
d_{\mathcal{T}_\theta}(\eta_0, \eta_t) \leq d_{\bar{M}}(\bar{\eta}_0, \bar{\eta}_t) + o(t).
\]
Taking the Taylor expansions of these expressions in this inequality, we derive
\[
\lVert \mu \rVert \leq \lVert d\Phi_\theta(\mu) \rVert
\]
as desired.
\end{proof}

\subsection{Conical tangent polygon~$Q$}

\mbox{ } \medskip

To complete the proof of Theorem~\ref{theo:B}, we adapt and refine
the arguments from Sections~\ref{sec:P} and~\ref{sec:PP}.

\medskip

As with the definition of~$P$, let~$Q \subseteq \mathcal{T}_\theta$ be the polygon formed by the intersection of the regions containing the origin~$\omega_0$ and bounded by the equidistant lines~$\Delta_i^\theta \subseteq \mathcal{T}_\theta$ between~$\omega_0$ and~$\omega_i$.
The polygon~$Q$ is flat with a conical singularity of angle~$\theta>2\pi$ at~$\omega_0$.

We note that the exponential-type map is not necessarily
distance-nondecreasing at a global scale.  Therefore we need to refine
the argument of Section~\ref{sec:P} leading to the following result.

\begin{lemma}
The image of~$Q$ by the exponential-type map lies within the Voronoi cell~$V_0$.
\end{lemma}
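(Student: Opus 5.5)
The plan is to compare distances along a geodesic joining $x_i$ to the image point. The point is that Lemma~\ref{lem:noncontracting} is only a local statement, so it controls the length of curves that stay outside~$D_0$, and curves that enter~$D_0$ need a separate argument.

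\emph{Setup.} Write $r=\tfrac12\sys(M)$. The map $\Phi_\theta$ preserves the distance to the center along each ray, and its restriction to~$\partial D_\theta$ is a homeomorphism onto~$\partial D_0$. Hence $\Phi_\theta$ is a homeomorphism from $\mathcal{T}_\theta\setminus \mathrm{int}\,D_\theta$ onto $\bar M\setminus \mathrm{int}\,D_0$. I take $\omega_i:=\Phi_\theta^{-1}(x_i)$ for $1\le i\le n$. This is well defined because $d(x_0,x_i)\ge \sys(M)=2r$, and it gives $d_{\mathcal{T}_\theta}(\omega_0,\omega_i)=d_{\bar M}(x_0,x_i)$. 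Points of $Q\cap D_\theta$ are understood to go to~$D_0$, which lies in~$V_0$. So it suffices to fix $\eta\in Q\setminus D_\theta$ and an index $i$, put $\bar\eta=\Phi_\theta(\eta)$, and prove
\[
d_{\bar M}(x_0,\bar\eta)\le d_{\bar M}(x_i,\bar\eta).
\]
By construction the left side equals $d_{\mathcal{T}_\theta}(\omega_0,\eta)$. Since $\eta\in Q$, this is at most $d_{\mathcal{T}_\theta}(\omega_i,\eta)$.

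\emph{Case split.} Let $\gamma$ be the unique geodesic from $x_i$ to $\bar\eta$ in the Cartan--Hadamard plane~$\bar M$.

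\emph{Case 1: $\gamma$ meets $\mathrm{int}\,D_0$.} Let $z$ be a point of $\gamma\cap D_0$. Then
\[
d(x_i,\bar\eta)=d(x_i,z)+d(z,\bar\eta)\ge \bigl(d(x_0,x_i)-r\bigr)+\bigl(d(x_0,\bar\eta)-r\bigr).
\]
Since $d(x_0,x_i)\ge 2r$, the right-hand side is at least $d(x_0,\bar\eta)$, which is the desired inequality. In this case the conical plane is not used at all.

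\emph{Case 2: $\gamma$ avoids $\mathrm{int}\,D_0$.} Then $\gamma$ lies in the region where $\Phi_\theta^{-1}$ is a homeomorphism, so $\Phi_\theta^{-1}\circ\gamma$ is a curve in $\mathcal{T}_\theta\setminus\mathrm{int}\,D_\theta$ from $\omega_i$ to~$\eta$. By Lemma~\ref{lem:noncontracting}, $\Phi_\theta$ is locally distance-nondecreasing, so this preimage curve has length at most $\mathrm{length}(\gamma)=d_{\bar M}(x_i,\bar\eta)$. Consequently
\[
d_{\mathcal{T}_\theta}(\omega_i,\eta)\le d_{\bar M}(x_i,\bar\eta),
\]
and combining this with the inequality from the setup finishes the proof.

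\emph{Main obstacle.} The delicate step is Case~2. Lemma~\ref{lem:noncontracting} only compares infinitesimal lengths, so I need rectifiability of $\Phi_\theta^{-1}\circ\gamma$ and the inequality between lengths. For this I would show that $\Phi_\theta^{-1}$ is locally $1$-Lipschitz, which follows from $\lVert d\Phi_\theta(\mu)\rVert\ge\lVert\mu\rVert$ together with the fact that $\Phi_\theta$ is a local diffeomorphism away from the boundary circle. I would also check that portions of $\gamma$ running along $\partial D_0$ cause no trouble; there $\Phi_\theta^{-1}$ contracts by the boundary length comparison built into the definition of~$\Phi_\theta$. Case~1 is exactly why a global noncontraction statement is not needed: whenever the geodesic would pass through the disk, the systole bound $d(x_0,x_i)\ge 2r$ already gives the Voronoi inequality directly.
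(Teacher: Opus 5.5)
Your proof is correct and follows essentially the same route as the paper: you split according to whether the geodesic from $x_i$ to $\Phi_\theta(\eta)$ meets $D_0$, pull it back by $\Phi_\theta^{-1}$ and apply Lemma~\ref{lem:noncontracting} when it avoids the disk, and otherwise use a triangle-inequality argument (your use of $d(x_0,x_i)\ge \sys(M)$ is exactly the fact behind the paper's $D_0\subseteq V_0$). Your explicit choice $\omega_i=\Phi_\theta^{-1}(x_i)$ and your remarks on passing from the infinitesimal estimate to lengths of curves make precise what the paper leaves implicit.
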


\begin{proof}
Consider a point~$\omega \in Q$ and its image $y=\Phi_\theta(\omega)$ in~$\bar{M}$.
By construction, the point~$\omega$ lies in the region containing the origin~$\omega_0$ and bounded by the equidistant lines~$\Delta_i^\theta$ in~$\mathcal{T}_\theta$.
Therefore,
\[
d_{\bar{M}}(x_0,y) = d_{\mathcal{T}_\theta}(\omega_0,\omega) \leq d_{\mathcal{T}_\theta}(\omega_i,\omega).
\]

Suppose the minimizing arc~$\alpha \subseteq \bar{M}$ between~$x_i$ and~$y$ avoids~$D_0$.
Then its preimage~$\Phi^{-1}_\theta(\alpha) \subseteq \mathcal{T}_\theta$ is an arc between~$\omega_i$ and~$\omega$, which is no longer than~$\alpha$ by Lemma~\ref{lem:noncontracting}.
Thus,
\[
d_{\bar{M}}(x_0,y) \leq d_{\mathcal{T}_\theta}(\omega_i,\omega) \leq d_{\bar{M}}(x_i,y).
\]
It follows that $y$ lies in the Voronoi cell~$V_0$ centered around~$x_0$.

Suppose the minimizing arc~$\alpha$ intersects~$D_0$.
Since $D_0$ is contained in the Voronoi cell~$V_0$, a point~$z$ of this arc is closer to~$x_0$ than~$x_i$.
Hence,
\[
d_{\bar{M}}(x_0,y) \leq d_{\bar{M}}(x_0,z) + d_{\bar{M}}(z,y) \leq d_{\bar{M}}(x_i,z) + d_{\bar{M}}(z,y) = d_{\bar{M}}(x_i,y).
\]
It follows as before that $y$ lies in~$V_0$.
\end{proof}

As a consequence of Lemma~\ref{lem:noncontracting}, the exponential-type map does not decrease areas.
With the relation~\eqref{eq:D_theta}, this yields the inequalities
\begin{equation} \label{eq:Q-}
\area(V_0 \setminus D_0) \geq \area(Q \setminus D_\theta) = \area(Q) - \frac{\theta}{8} \sys(M)^2.       
\end{equation}

Now, as in Section~\ref{sec:PP}, we decompose~$Q$ into at most~$n$ triangles with a common vertex at~$\omega_0$.
Applying the area comparison inequalities~\eqref{eq:tan}, Jensen's inequality and the fact that $\omega_0$ is a conical singularity of angle~$\theta$, we obtain as in Section~\ref{sec:PP} that
\begin{equation} \label{eq:Q}
\area(Q) \geq \frac{n}{4} \tan\left( \frac{\theta}{2n} \right) \sys(M)^2.
\end{equation}

We now combine the inequalities~\eqref{eq:48}, \eqref{eq:Q-}
and~\eqref{eq:Q} with the expression~\eqref{eq:theta} for~$\theta$ and
the bound~$n \leq 6|\chi|+6$.  Let
$r(\sigma)=\sqrt{1+\frac{\pi|\chi|}{24\,\sigma}}$.  Then
\begin{align}
\area(M) & = \area(D_0) + \area(V_0 \setminus D_0) \nonumber \\ & \geq
\left( \frac{\pi}{4} + \frac{\pi^2 |\chi|}{96 \, \sigma_\chi} +
\frac{3}{2} (|\chi|+1) \tan \frac{\pi\, r(\sigma_\chi)}{6|\chi|+6} -
\frac{\pi}{4} r(\sigma_\chi) \right) \sys(M)^2 \nonumber \\ & \geq
\left( \frac{\pi}{4} \left( r(\sigma_\chi)^2-r(\sigma_\chi) \right) +
\frac{3}{2} (|\chi|+1) \tan \frac{\pi\, r(\sigma_\chi)}{6|\chi|+6}
\right) \sys(M)^2.  \label{eq:last}
\end{align}

Now, the value of~$\sigma_\chi$ is chosen to be the (positive) solution of the equation
\begin{equation} \label{eq:sigma0}
\frac{\pi}{4} \left( r(\sigma_\chi)^2-r(\sigma_\chi) \right) + \frac{3}{2} (|\chi|+1) \tan \frac{\pi\, r(\sigma_\chi)}{6|\chi|+6} = \sigma_\chi
\end{equation}
which is greater than $1.002$.  It follows that $\sigma(M) \geq
\sigma_\chi$, which contradicts our assumption.  (When $M$ is the
Klein-double and $\sigma_\chi$ is replaced by $1$, the multiplicative
constant in~\eqref{eq:last} is roughly equal to~$1.099$, which is
greater than~$1$.)

\forget
\begin{remark}
Consider a connected closed surface~$M$ other than the torus and the Klein bottle with a nonpositively curved metric.
Suppose that $|\chi(M)| \geq 4$.
In this case, the systolic inequality~\eqref{eq:8} can be improved into
\[
\sigma(M) \geq \frac{\pi}{24} (3 +\sqrt{33}) \simeq 1.14.
\]
This, together with the discussion in Section~\ref{sec:two}, demonstrates that to replace the systolic inequality in the second part of Theorem~\ref{theo:main} with that of Theorem~\ref{theo:B}, it suffices to establish the inequality for the connected closed surface of Euler characteristic~$-3$, namely~$5 \RP^2$.
\end{remark}
\forgotten

\begin{remark}
Denote by $\tau_\chi = \frac{\pi}{8} + \frac{\pi}{8} \sqrt{1 + \frac{2}{3} | \chi|}$ the systolic lower bound given by~\eqref{eq:chi}.
While the systolic lower bound given by~$\sigma_\chi$ is better, the difference between the two bounds diminishes as the absolute value of~$\chi$ increases, as illustrated by Table~\ref{table}.

\begin{table}[htbp!]
\begin{tabular}{|c|c|c|c|c|c|}
\hline
$\chi$ & -2 & -3 & -4 & -5 & -6 \\
\hline
$\sigma_\chi$ & $1.002$ & $1.078$ & $1.148$ & $1.213$ & $1.272$ \\
\hline
$\tau_\chi$ & $0.992$ & $1.072$ & $1.144$ & $1.210$ & $1.270$ \\
\hline
\end{tabular}
\medskip
\caption{Approximate values of~$\sigma_\chi$ and~$\tau_\chi$} \label{table}
\end{table}
\end{remark}

\section{Some nonpositively curved metrics on the Klein-double}

In this section, we will present two nonpositively curved piecewise
flat metrics with conical singularities on the Klein-double~$4\RP^2$.
The first example, modeled on the decomposition $\K^2 \# \K^2$, is
easy to construct, while the second example, modeled on the
decomposition $\K^2 \# \T^2$, yields a better systolic area.  Combined
with Theorem~\ref{BB}, we deduce Proposition~\ref{p51} below.

\medskip

First we present an easy construction to bound the systolic area by~$1.5$.  

\begin{example}
Consider a unit square with suitable identification on the boundary
making it into a Klein bottle.  Cut out a little square of side length
$\frac{1}{2}$ at the center, with the sides parallel to those of the
unit square, and form the double along the boundary of the little
square.  This gives a piecewise flat surface~$M$ with four conical
singularities of angle~$3\pi$ homeomorphic to~$\K^2\#\K^2=4\RP^2$.
Observe that all the geodesics parallel to the sides of the unit
square are systolic closed curves.  By construction, the surface~$M$
has unit systole and area~$\frac{3}{2}$, i.e., $\sigma(M)=1.5$.
\end{example}

The following proposition provides a sharper estimate.

\begin{proposition}
\label{p51}  
There exists a nonpositively curved piecewise flat metric on the
Klein-double with the following properties:
\begin{enumerate}
\item
the metric is glued from four copies of an inscribed Euclidean
hexagon and a cylinder;
\item
it has  two conical singularities of angle $4(\pi- \theta)
\simeq 460^\circ$ and eight conical singularities of
angle~$2\pi+\theta \simeq 425^\circ$, where
\[
\theta = 2 \arccos\left(\frac{1+\sqrt{33}}{8}\right) \simeq 65^\circ;
\]
\item
the resulting systolic area satisfies
\[
\sigma(4\RP^2) = 1 + \frac{1}{16} \sqrt{414-66 \sqrt{33}}  \simeq
1.3690;
\]
\end{enumerate}

Thus, the minimal systolic area among all nonpositively curved metrics
on~$4\RP^2$ lies between~$1.002$ and~$1.37$.
\end{proposition}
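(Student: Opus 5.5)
We need to construct an explicit piecewise flat metric on $\K^2\#\T^2$, check its cone angles, compute its systole and area, and then combine with Theorem~\ref{BB} for the final sentence.

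\emph{Construction.} Start from a flat cylinder $C$ of circumference $1$, which will carry a family of systolic loops. Attach four congruent Euclidean hexagons inscribed in a circle. Each hexagon has alternating side lengths $a$ (short) and $b$ (long), and its interior angles are determined by the central angles subtended by the sides.

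Glue the two boundary circles of $C$ to long sides of the hexagons. Glue the remaining sides among the hexagons, using one orientation-reversing identification so that the quotient is nonorientable. The result should be a surface of Euler characteristic $-2$, homeomorphic to $\K^2\#\T^2=4\RP^2$. I would verify this by counting $v-e+f$ from the gluing pattern, checking nonorientability via the orientation-reversing pairing.

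\emph{Cone angles.} The vertices fall into two classes. The first class consists of $2$ vertices where four hexagon corners of one type meet; the second consists of $8$ vertices where a cylinder boundary point ($\pi$) meets hexagon corners. With $\theta$ the relevant central-angle parameter, the inscribed-angle formulas give totals $4(\pi-\theta)$ and $2\pi+\theta$. Both exceed $2\pi$ provided $\theta<\pi/2$, so the metric is nonpositively curved in the Alexandrov sense. As a consistency check, Gauss--Bonnet requires $\sum(2\pi-\alpha)=2\pi\chi=-4\pi$, i.e.
\[
2(2\pi-4\pi+4\theta)+8(-\theta)=-4\pi .
\]
This holds identically, which supports the combinatorics.

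\emph{Systole and optimization.} Normalize the systole to $1$. The main obstacle is proving that no noncontractible loop is shorter than $1$. Since the metric is flat with cone points of angle $>2\pi$, systolic loops are geodesic polygonal paths through cone points or straight closed geodesics. I would enumerate the candidate homotopy classes by their combinatorial passage through the cells: the cylinder core, loops crossing a hexagon pair, and loops through the two high-angle vertices. For each class, I would compute the shortest length by unfolding into the plane.

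I expect the optimum to occur when several families are simultaneously systolic, namely the cylinder circumference, the hexagon diagonals, and the transversal loops. Setting these lengths equal to $1$ gives algebraic equations in the hexagon parameters and the cylinder height. Solving them should yield $\cos(\theta/2)=\frac{1+\sqrt{33}}{8}$, which is a root of $4c^2-c-2=0$. I would choose the parameters to balance these families, then confirm that every other class is strictly longer, using the convexity that comes from the angle excess.

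\emph{Area and conclusion.} Compute the area as four hexagon areas (a sum of isosceles triangles from the circumcenter) plus the cylinder area. Simplifying with $\cos(\theta/2)$ substituted should give
\[
1+\tfrac{1}{16}\sqrt{414-66\sqrt{33}}\approx 1.3690 .
\]
This gives the upper bound. Theorem~\ref{BB} supplies the lower bound $1.002$, which completes the final claim of Proposition~\ref{p51}.
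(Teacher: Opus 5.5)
There is a genuine gap, and it starts with the hexagon. In an inscribed hexagon with alternating sides $a,b,a,b,a,b$, the central angles alternate $\alpha,\beta$ with $\alpha+\beta=2\pi/3$. Each interior angle is $\pi$ minus half the sum of its two adjacent central angles, so \emph{every} corner equals $2\pi/3$. Your two vertex classes would then have angles $8\pi/3$ and $7\pi/3$. That is the case $\theta=\pi/3$, not $\theta=2\arccos\frac{1+\sqrt{33}}{8}\approx 65^\circ$. Your Gauss--Bonnet check holds identically in $\theta$, so it cannot detect this.

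The hexagon $H$ actually needed has side pattern short, long, long, short, long, long. It consists of four isosceles triangles from the center, with height $\frac14$ and apex angle $\theta$, over the long sides. The other two have base $\frac14$ and apex angle $\pi-2\theta$, over the two opposite short sides. Equal legs give $\cos\theta=\frac12\cos\frac{\theta}{2}$, which is your $4c^2-c-2=0$. The corners are then $\pi-\theta$ (long/long) and $\frac{\pi+\theta}{2}$ (short/long), and these produce $4(\pi-\theta)$ and $2\pi+\theta$.

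The cylinder must be glued along the \emph{short} sides, four per boundary circle, giving circumference $1$. Its height must be $1-2h$, with $h$ the height over a short side. You fix neither, so neither the cone angles nor the area value is actually established.

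The gluing pattern is also left unspecified, and this is where the structure needed for the systole comes from. In the paper, the four hexagons are the lifts of the four northern faces of an octahedral decomposition of $S^2$ under the hyperelliptic cover $y^2=x^8-14x^4+1$. This gives a genus-one surface $\Sigma_{1,2}$ with two boundary circles. Its hyperelliptic involution $\iota$ acts on each hexagon by central symmetry and swaps the two circles. The cylinder is attached with one end glued through $\iota$, and this twist is what makes the surface nonorientable.

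The systole is then not found by enumerating homotopy classes; there are infinitely many, so ``unfold each class'' is not an argument. A noncontractible geodesic loop meeting the cylinder alternates between arcs crossing it (each of length $\ge 1-2h$) and arcs of $\Sigma_{1,2}$ between boundary points (each $\ge 2h$), so it has length $\ge 1$.

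For a systolic loop $\gamma\subseteq\Sigma_{1,2}$, one uses that $\iota$ acts as $-\mathrm{id}$ on $H_1$, so $\gamma$ and $-\iota(\gamma)$ are homologous. If they are homotopic, the flat strip theorem gives an $\iota$-invariant flat cylinder. Its median circle contains two fixed points of $\iota$ at distance $\frac12$, which forces length $1$. Otherwise, $\gamma$ is homotopic to a boundary circle, which is the unique geodesic in its class and has length $1$.

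Your proposal has no substitute for this involution argument. So your step ``confirm that every other class is strictly longer'' has nothing behind it.
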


\forget

\begin{proposition}
The minimal systolic area of $4\RP^2$ is in the interval
$[1.002,1.5]$.
\end{proposition}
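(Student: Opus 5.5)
The proof will be an explicit construction followed by three verifications (Gauss--Bonnet/topology, systole, area). The final sentence then follows from Theorem~\ref{BB}.

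\textbf{Construction and topology.} I will model the surface on the decomposition $\K^2\#\T^2$. Take four congruent Euclidean hexagons $H_1,\dots,H_4$, all inscribed in a circle of the same radius. Their side lengths and angles will be fixed by a single parameter, which will later turn out to be the angle $\theta$ of the statement. Take also a flat cylinder $C$ whose boundary circles have length equal to the sum of suitable hexagon sides.

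The hexagons are glued in pairs along their sides so that their union is a Klein bottle with one boundary component. The orientation-reversing identifications are chosen so that the holonomy around the two generators is that of a Klein bottle. The cylinder $C$ is glued onto the remaining free sides with a twist, which supplies the torus handle.

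Next I will count the cells. The vertices form ten classes: two classes where four hexagon corners meet, and eight classes where hexagon corners meet the boundary of $C$. Together with the edges and faces, this should give $\chi=-2$. Since the surface is nonorientable, it is $4\RP^2$.

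\textbf{Cone angles and curvature.} At the two interior vertex classes, four hexagon angles of size $\pi-\theta$ meet, giving total angle $4(\pi-\theta)$. At the eight vertices on the cylinder boundary, each has hexagon angle $\pi+\theta$ on one side and angle $\pi$ from the flat side of $C$, giving total angle $2\pi+\theta$. As a consistency check, Gauss--Bonnet holds:
\[
2\bigl(2\pi-4(\pi-\theta)\bigr)+8(-\theta)=-4\pi=2\pi\chi .
\]
Nonpositive curvature amounts to every cone angle being at least $2\pi$. The vertices of angle $2\pi+\theta$ satisfy this automatically, and the vertices of angle $4(\pi-\theta)$ satisfy it exactly when $\theta\le \pi/2$. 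This holds for $\theta\simeq 65^\circ$.

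\textbf{Systole (main obstacle).} I expect computing the systole to be the main difficulty. In a nonpositively curved cone metric, a systolic loop is a closed geodesic. It is therefore a broken line, straight inside the flat pieces, and wherever it passes through a cone point it makes angles at least $\pi$ on both sides.

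My first step will be to classify the candidates by combinatorial type. I will consider the following types:
\begin{enumerate}
\item loops crossing the cylinder, whose length is bounded below by the cylinder height;
\item loops running around the core of the cylinder;
\item loops made of diagonals of one or two hexagons, joining opposite glued sides or passing through a cone vertex.
\end{enumerate}
For each type, I will develop the loop in the plane, unfolding across glued edges, and bound its length below by a Euclidean distance between translated hexagon vertices.

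The inscribed-hexagon condition should make several of these candidates equal in length to the diameter of the circumscribed circle. The free parameter $\theta$, together with the cylinder height, will be chosen so that all the competing families have length exactly $1$. I expect this balancing to lead to the quadratic equation
\[
4\cos^2(\theta/2)-\cos(\theta/2)-2=0, \qquad \text{i.e. } \cos(\theta/2)=\frac{1+\sqrt{33}}{8}.
\]
The delicate point is to make sure that no shorter loop exists. To handle this, I would rule out every geodesic that stays within distance $\tfrac12$ of a single cone point using the angle condition, and then check the finitely many remaining unfolding patterns one by one.

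\textbf{Area and conclusion.} I will compute the area of the surface as four times the area of an inscribed hexagon with the given angles, plus the circumference of $C$ times its height. With the values above, this total should simplify to
\[
1+\tfrac{1}{16}\sqrt{414-66\sqrt{33}}\simeq 1.369,
\]
which with unit systole is the systolic area claimed in item~(3). Combining this upper bound with the lower bound $1.002$ from Theorem~\ref{BB} gives the stated interval.
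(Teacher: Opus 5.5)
Your proposal targets the much harder Proposition~\ref{p51}, which gives systolic area $\simeq 1.369$, and your bookkeeping matches the paper: cone angles $4(\pi-\theta)$ and $2\pi+\theta$, the Gauss--Bonnet check, the condition $\theta\le\pi/2$, the quadratic $4\cos^2(\theta/2)-\cos(\theta/2)-2=0$, and the area formula. But the step that carries all the weight, namely that the systole equals $1$, is only announced, and as set up it cannot be checked. You never say which hexagon sides are identified or what the twist on the cylinder is, so there is no definite surface, and the ``finitely many unfolding patterns'' are never produced.

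The topology you describe also contradicts your own cell count. There are eight free sides forming the boundary, two interior vertex classes of four corners and eight boundary classes of two corners. So the union of the hexagons has $\chi=10-16+4=-2$ and cannot be a one-holed Klein bottle, which has $\chi=-1$. It is also unclear how the two ends of the cylinder are then attached.

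In the paper, the four hexagons form a two-holed torus $\Sigma_{1,2}$: the lift of the northern hemisphere of an octahedral decomposition of $S^2$ under the hyperelliptic cover $y^2=x^8-14x^4+1$. The cylinder of height $1-2h$ joins its two boundary circles through the hyperelliptic involution $\iota$, which is what makes the result nonorientable. This specific combinatorics is what makes the systole provable:
\begin{enumerate}
\item arcs crossing the cylinder have length at least $1-2h$;
\item arcs of $\Sigma_{1,2}$ with endpoints on its boundary have length at least $2h$;
\item a systolic loop $\gamma\subseteq\Sigma_{1,2}$ is compared with $-\iota(\gamma)$, which is homologous to it because $\iota$ acts by $-1$ on homology. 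The flat strip theorem (or boundary parallelism, if the two loops are not homotopic) then forces length $1$.
\end{enumerate}
Incidentally, your quadratic does not come from balancing loop lengths. It is the equal-circumradius condition for the triangles of height $\tfrac14$ and apex $\theta$ and those with base $\tfrac14$.

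For the interval $[1.002,1.5]$ none of this is needed, and the paper's proof takes a few lines. Take the square flat Klein bottle built on a unit square, remove the central square of side $\tfrac12$, and double along its boundary. This gives $\K^2\#\K^2=4\RP^2$, piecewise flat with four cone points of angle $3\pi$, hence nonpositively curved. Its area is $2(1-\tfrac14)=\tfrac32$, and its systole is $1$, with the geodesics parallel to the sides of the unit square being systolic. The systole bound rests only on two facts: distinct lifts of the removed square to the universal cover of the Klein bottle are at distance at least $\tfrac12$, and the square has perimeter $2$. The lower bound is Theorem~\ref{BB}. So either supply the precise gluing and a complete systole argument for your hexagonal surface, which would give the stronger bound $1.369$, or use this example.
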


\begin{proof}
The lower bound follows from Theorem~\ref{BB}.  To establish the upper
bound, consider a unit square with suitable identification on the
boundary making it into a Klein bottle.  We cut out a little square of
side $\frac12$ at the center and form the double along the boundary of
the little square.  This gives a $4\RP^2$ of area $2 - \frac12 = 1.5$
and unit systole.
\end{proof}

\forgotten


The construction is inspired by a construction of~\cite{KS15}.  

\subsection{Inscribed Euclidean hexagon}
\label{s51}

\mbox{ } \medskip

We first introduce a non-regular inscribed Euclidean hexagon defined as
follows.  Consider the symmetric Euclidean hexagon $H$ composed of
pairwise opposite isosceles triangles based at the center: four of
them have height~$\frac{1}{4}$ and main angle~$\theta$, and two of
them have height~$h$ and base~$\frac{1}{4}$; see Figure~\ref{fig:H}.
Note that all the radii arising from the center of~$H$ are equal, and therefore the hexagon is inscribed.

\begin{figure}[htbp!] 
\vspace{1.5cm}
\def\svgwidth{3.5cm}
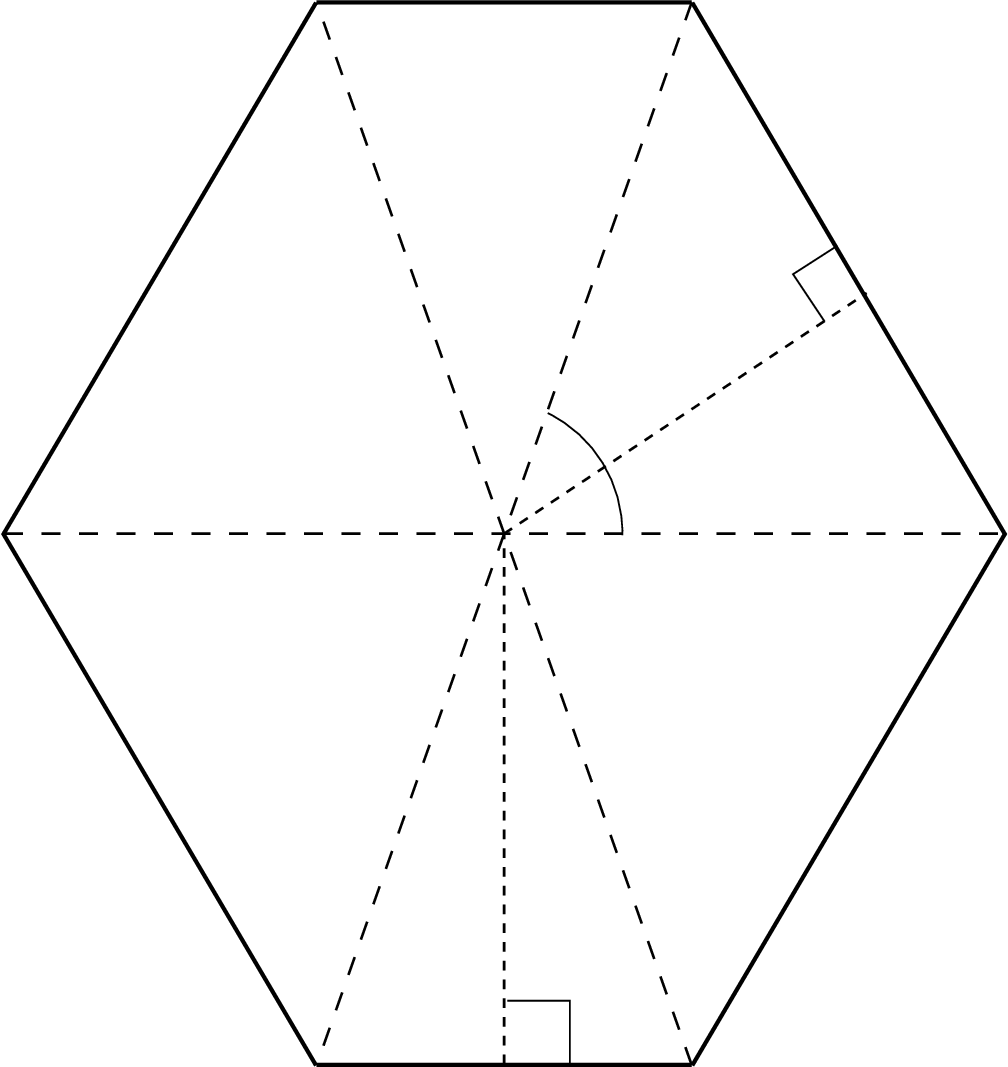 
\vspace{0.2cm}
\caption{The inscribed hexagon~$H$} \label{fig:H}
\end{figure}

In order for the isosceles triangles to have the same leg length, the angle~$\theta$ and the height~$h$ must satisfy
\begin{equation} \label{eq:theta-h}
\left\{
\begin{array}{rl}
\theta & \displaystyle = 2 \arccos\left(\frac{1+\sqrt{33}}{8}\right) \simeq 65.07^\circ \\
 & \\
h & \displaystyle   = \frac{1}{8} \sqrt{\frac{15-\sqrt{33}}{2}} \simeq 0.2689
\end{array}
\right.
\end{equation}

The base of the isosceles triangles with main angle~$\theta$ has length $\frac{1}{2} \tan(\frac{\theta}{2}) \simeq 0.3189$.
We will refer to these sides of~$H$ as the \emph{long sides}, and to the sides of length~$\frac{1}{4}$ as the \emph{short sides}.
Thus, the inscribed hexagon~$H$ has four long sides and two short sides.

The area of~$H$ is equal to
\[
\area(H) = \frac{1}{32} \sqrt{\frac{111-\sqrt{33}}{2}} \simeq 0.2267.
\]

\subsection{Hyperelliptic genus three surface with symmetries}

\mbox{ } \medskip

In order to describe the surface in Proposition~\ref{p51}, it is
convenient to start with the genus three hyperelliptic Riemann
surface~$\Sigma_3$ with binary octahedral symmetry group given by the
following lemma.  Although this surface is known to experts (see e.g.,
\cite{Ro97}), we include a description of its construction for the
reader's convenience.

\begin{lemma} \label{lem:Sigma3}
The genus three Riemann surface~$\Sigma_3$ defined as the smooth completion of the affine algebraic curve 
\[
y^2=x^8-14x^4+1
\]
admits a conformal hyperelliptic involution inducing a two-fold ramified cover $\Sigma_3 \to S^2$ with eight branch points corresponding to the centers of the faces of a regular octahedral decomposition of~$S^2$.
\end{lemma}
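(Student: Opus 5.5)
The plan is to verify the standard facts about the hyperelliptic curve $y^2=f(x)$ with $f(x)=x^8-14x^4+1$, and then to identify the eight roots of $f$ with the face centers of an octahedron.

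\emph{Genus and the involution.} First I will check that $f$ has eight distinct roots. As a polynomial in $u=x^4$ we have $u^2-14u+1$, whose roots are $u=7\pm4\sqrt3$. These are distinct, positive and nonzero, so each gives four distinct fourth roots, and $f$ has eight simple roots. Since $\deg f=8$ is even, the smooth completion has two points over $x=\infty$ and the cover is unramified there. The projection $(x,y)\mapsto x$ is a degree-two map to $S^2=\C\cup\{\infty\}$ branched exactly at the eight roots. Riemann--Hurwitz then gives $2-2g=2\cdot2-8$, so $g=3$. The map $(x,y)\mapsto(x,-y)$ is a holomorphic involution, hence conformal, and its quotient is this ramified cover.

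\emph{Identifying the branch points.} Next I will show the eight roots are the face centers of a regular octahedron under stereographic projection. The vertices of the standard octahedron correspond to $0,\infty,\pm1,\pm i$, and its face centers are the cube vertices $(\pm1,\pm1,\pm1)/\sqrt3$. The centers in the upper hemisphere (those with $z>0$) project to $x=\lambda e^{i\pi(2k+1)/4}$ for some real $\lambda$, so $x^4=-\lambda^4$. The lower ones project to the reciprocals, which have modulus $1/\lambda$. This gives $x^8+c\,x^4+1$ for some real $c$.

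The sign of $c$ depends on the rotation convention. With the axes rotated by $\pi/4$, so that the face centers lie on the directions $x^4>0$, we get $x^4=\lambda^4$ and $x^4=\lambda^{-4}$. This gives $f$ with $\lambda^4+\lambda^{-4}=14$. I will confirm this numerically. The point $(1,1,1)/\sqrt3$ makes angle $\arccos(1/\sqrt3)$ with the pole, and its projection has modulus $\tan(\tfrac12\arccos(1/\sqrt3))=\sqrt3-1$ in the diagonal direction. After the rotation by $\pi/4$ this becomes real with $x^4=(\sqrt3-1)^4=28-16\sqrt3=4(7-4\sqrt3)$.

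Here the normalization of the stereographic projection (radius and scaling) has to be fixed carefully, since a factor like $4$ shows up. I expect this bookkeeping to be the only real obstacle. I would handle it by projecting from the unit sphere onto the equatorial plane, $x=(X+iY)/(1-Z)$, rather than onto the tangent plane. That gives modulus $\sqrt2/(\sqrt3-1)$ for the lower face centers and $\sqrt2/(\sqrt3+1)$ for the upper ones. Their fourth powers are $4/(\sqrt3\mp1)^4=4/(28\mp16\sqrt3)=1/(7\mp4\sqrt3)=7\pm4\sqrt3$, which are exactly the roots found above.

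\emph{Symmetry remark.} Finally, the octahedral group acts on $S^2$ by Möbius maps preserving this set of eight points. Each such map lifts to $\Sigma_3$ by multiplying $y$ by a suitable factor, which yields the binary octahedral symmetry. This is not needed for the lemma as stated.
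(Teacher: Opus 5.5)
Your argument is correct and is essentially the paper's. The paper likewise places the cube so that a vertex projects onto the positive real axis and computes its image $\sqrt{2+\sqrt3}=\sqrt2/(\sqrt3-1)$ under projection from the North pole. It then uses the fourfold rotation and the inversion $x\mapsto 1/\bar x$ to get $(x^4-(7+4\sqrt3))(x^4-(7-4\sqrt3))=x^8-14x^4+1$, and finishes with Riemann--Hurwitz. Your explicit checks of simple roots and of non-ramification over $\infty$ fill in what the paper leaves to the reader.

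One correction: your ``numerical confirmation'' paragraph contains a false equality. In fact $\tan\bigl(\tfrac12\arccos(1/\sqrt3)\bigr)=(\sqrt3-1)/\sqrt2=\sqrt{2-\sqrt3}$, not $\sqrt3-1$. Since $\tan(\alpha/2)$ is already the equatorial-plane normalization (the tangent-plane one would multiply fourth powers by $16$, not $4$), the stray factor $4$ is an arithmetic slip, not a convention issue. Delete that paragraph and keep only your final computation. In that computation, projection from $Z=1$ actually sends the \emph{upper} face centers to modulus $\sqrt2/(\sqrt3-1)$, not the lower ones, which is harmless by symmetry.
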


\begin{proof}
We will define $\Sigma_3$ by an equation $y^2=P(x)$, where $P$ is a monic polynomial of degree~$8$ whose zeros are located at the vertices of a cube inscribed in the Riemann sphere.
Up to rotations, we can assume that the inscribed cube has horizontal and vertical faces.
Denote by $A$, $B$, $C$, $D$ the vertices in the North hemisphere~$S^2_+$ and by $A'$, $B'$, $C'$, $D'$ the symmetric vertices in the South hemisphere.
Up to rotations, we can further assume that $A$ lies in the vertical plane containing the $x$-axis of~$\R^2$ and has positive abscisse.
From the relation $AB=AA'$, we deduce that the angle~$\varphi$ between~$OA$ and the horizontal plane satisfies $\sqrt{2} \cos \varphi = 2 \sin \varphi$.
The stereographic projection from the North pole sends~$A=(\cos \varphi,0,\sin \varphi)$ to the point $\sqrt{2+\sqrt{3}} \in \C$.
By rotations, this projection sends $A$, $B$, $C$, $D$ to the roots of $z^4 - (\sqrt{2+\sqrt{3}})^4$.
Taking an inversion with respect to the unit circle, it also sends $A'$, $B'$, $C'$, $D'$ to the roots of $z^4 - (\sqrt{2-\sqrt{3}})^4$.
Thus, the Riemann surface~$\Sigma_3$ is defined as the smooth completion of the affine algebraic curve given by the product of these two polynomials, that is,
\[
y^2=x^8-14x^4+1.
\]

One can check from the Riemann--Hurwitz formula that this equation defines a genus three surface.
\end{proof}

\subsection{Gluing inscribed Euclidean hexagons}
\label{s53}

\mbox{ } \medskip



Consider the two-fold ramified cover $\Sigma_3 \to S^2$ given by Lemma~\ref{lem:Sigma3} whose branch points correspond to the centers of the faces of a regular octahedral decomposition of~$S^2$, with vertex the North pole of~$S^2$.
The North hemisphere $S^2_+$ is formed by the triangular faces adjacent
to the North pole.  Under the ramified cover, this hemisphere lifts to a genus
one surface~$\Sigma_{1,2}$ with two disks removed.  The
surface~$\Sigma_{1,2}$ inherits a hexagonal decomposition formed of
four hexagons corresponding to the lifts of the triangles in the
triangulation of~$S^2_+$.  Each hexagon has exactly two boundary
edges, which are opposite sides of the hexagon and belong to different
boundary components of~$\Sigma_{1,2}$.

While retaining the topological structure, we now modify the conformal
class as follows.  We identify each hexagon with the Euclidean
hexagon~$H$ of Section~\ref{s51} so that the short sides correspond to
these two opposite boundary edges.  With these identifications, each
boundary component of~$\Sigma_{1,2}$ has length~$1$.

By construction, the hyperelliptic involution $\iota$ acts on each
hexagon by the central symmetry and exchanges the two boundary
components of~$\Sigma_{1,2}$.
Next, we use $\iota$ to glue together the connected components of the
boundary of~$\Sigma_{1,2}$ after inserting a cylinder as follows.

Denote one of the boundary components by $\mathcal B$.  Let
$I=[0,1-2h]$, where $h$ is defined in~\eqref{eq:theta-h}.  We
identify~$\mathcal B$ with the boundary component ${\mathcal B} \times
\{0\}$ of the flat cylinder~$\mathcal{C}= {\mathcal B} \times I$ of
unit circumference and height~$1-2h$.  We now identify ${\mathcal B}
\times\{1-2h\}$ with the other boundary component of $\Sigma_{1,2}$ by
the map
\[
(p,1-2h)\mapsto\iota(p).
\]
The resulting surface
\[
M= \Sigma_{1,2} \cup \mathcal{C}
\]
is piecewise flat and homeomorphic to~$4\RP^2=\T^2 \# \K^2$.  By
construction, it contains the cylinder $\mathcal{C} \subseteq M$.

The surface~$M$ has two conical singularities located at the preimages
of the North pole under the ramified double cover.  Four copies
of~$H$ meet around each of these singularities at an angular sector
delimited by two long sides.  Since the angle of each of these sectors
is equal to $\pi-\theta$, the two conical singularities have an angle
of~$4(\pi- \theta)$, where $\theta$ is defined in~\eqref{eq:theta-h}.

The surface~$M$ has eight additional conical singularities
corresponding to the four vertices along the equator of the
triangulation of the hemisphere (four on each boundary component of
the flat cylinder $\mathcal{C} \subseteq M)$.  Two copies of~$H$ meet around each
of these singularities at an angular sector delimited by a short and a
long side, along the flat cylinder~$\mathcal{C}$.  Since the angle of each of
these sectors is equal to $\frac{1}{2}(\pi+\theta)$, each of the eight
conical singularities has an angle of~$2\pi+ \theta$, with a
contribution of~$\pi$ from the flat cylinder~$\mathcal{C}$.

One can check that the surface~$M$ satisfies the Gauss--Bonnet equation
\[
2\pi \chi(M) = \sum_{i=1}^k (2\pi-\alpha_i)
\]
for piecewise flat metrics with $k$ conical singularities of angles~$\alpha_i$.

\medskip


\subsection{Systolic loops and area}

\mbox{ } \medskip

Let $M$ be the surface constructed in Section~\ref{s53}.

\begin{lemma}
The systole of~$M$ equals~$1$.  Furthermore, the surface~$M$ admits
the following three families of systolic closed curves:
\begin{enumerate}
\item two-sided closed geodesics orthogonal to the long sides of the
  hexagon; \label{f1}
\item one-sided closed geodesics orthogonal to the short sides of the
  hexagon at their midpoints;
\item two-sided closed geodesics foliating the cylinder~$\mathcal{C}$.
\end{enumerate}
The systolic closed curves in the first family are parallel to the inverse image under the ramified cover $\Sigma_{1,2} \to S^2_+$ of the segment, for the quotient metric, joining a pair of branch points.
\end{lemma}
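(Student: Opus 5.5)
We argue in two halves: first we exhibit the three families of closed geodesics of length exactly~$1$, and then we show that every noncontractible closed curve has length at least~$1$. For the upper bound and the three families, we work directly with the explicit geometry of~$H$.

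\emph{Family (3).} The cylinder~$\mathcal{C}$ has unit circumference, so its circles $\mathcal{B}\times\{t\}$ are closed geodesics of length~$1$. They are two-sided and homotopic to~$\mathcal{B}$. This curve is noncontractible because it is an essential curve of $\Sigma_{1,2}$, separating neither of the handles in a trivial way.

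\emph{Family (2).} Take the segment through the center of a hexagon joining the midpoints of its two short sides; it has length~$2h$. On one short side it exits into $\mathcal{C}$ and crosses the cylinder orthogonally, over length $1-2h$. It then re-enters through $\iota$ of its starting point. Since $\iota$ is the central symmetry of the hexagon, this point is the midpoint of the opposite short side of the same hexagon, so the curve closes up with total length $2h+(1-2h)=1$. The gluing reverses the transverse orientation, so the curve is one-sided, hence noncontractible.

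\emph{Family (1).} In each hexagon, the segment from a long side through the center to the opposite long side has length $2\cdot\frac14=\frac12$. Concatenating two such segments through adjacent hexagons, which are glued along long sides with matching perpendiculars, gives closed geodesics of length~$1$. These are the lifts of the segment joining two branch points. They are two-sided and noncontractible because they double-cover an arc between branch points.

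For the lower bound, let $\gamma$ be a shortest noncontractible closed curve. Since $M$ is nonpositively curved, $\gamma$ is a closed geodesic, avoiding cone points of angle $>2\pi$ except by turning with angle $\ge\pi$ on both sides. We split into cases according to how $\gamma$ meets~$\mathcal{C}$.

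\emph{If $\gamma$ crosses $\mathcal{C}$ from one boundary to the other,} it spends length at least $1-2h$ in $\mathcal{C}$. It must then return through $\Sigma_{1,2}$ from a point $p$ to $\iota(p')$, where $p$ and $p'$ are the two endpoints on the boundary. We show this path in $\Sigma_{1,2}$ has length at least~$2h$. The key observation is that $2h$ is the distance between the two boundary components of~$\Sigma_{1,2}$: each hexagon has its short sides at distance~$h$ from the center, and the long-side gluings only lengthen paths. The hexagon was designed precisely so that this distance is~$2h$.

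\emph{If $\gamma$ stays in $\mathcal{C}$, or enters and leaves through the same boundary,} it is homotopic into $\Sigma_{1,2}$ relative to the boundary. In $\Sigma_{1,2}$ we use the developing map: cut $\Sigma_{1,2}$ along the long sides and develop $\gamma$ into the plane. We show that any essential loop in $\Sigma_{1,2}$ either is homotopic to a boundary curve, of length $\ge 1$, or must traverse at least two hexagons from a long side to the opposite long side, giving length $\ge 2\cdot\frac12$. The latter uses that opposite long sides are at distance~$\frac12$ and that the remaining width constraints come from the choice of~$\theta$.

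The main obstacle is this last step: ruling out short essential loops that wind diagonally through several hexagons, cutting corners near the conical points. We would handle it with a Voronoi/width argument. We project $\gamma$ via the branched cover $\Sigma_{1,2}\to S^2_+$ with the quotient metric, which is a disk made of four half-hexagons. In that disk, a loop lifting to an essential loop must either surround branch points, or join the boundary to itself around the pole. We then bound its length below using the strip widths~$\frac14$ around each branch point; the equal leg length condition defining~$\theta$ and~$h$ makes these bounds sharp and equal to~$1$. This reproduces, in the lift, exactly the equality cases listed in the three families.
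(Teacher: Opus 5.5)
\textbf{There is a genuine gap: the case of noncontractible loops lying in $\Sigma_{1,2}$ is not proved.}

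Your construction of the three families and your treatment of loops crossing $\mathcal{C}$ are along reasonable lines. The gap is the lower bound for noncontractible loops contained in $\Sigma_{1,2}$. This is where the boundary curves and family~(1) live, so it is the heart of the lemma.

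Your dichotomy (boundary-parallel, or at least two traversals of a hexagon between opposite long sides) is stated without argument, and the width argument behind it breaks down. A geodesic passing through or near one of the two cone points of angle $4(\pi-\theta)$ cuts hexagons between \emph{adjacent} long sides, or ends a passage at a vertex. Passages between long sides separated only by a short side have length only about $\frac14$. Neither kind of passage is bounded below by $\frac12$.

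You flag this as ``the main obstacle,'' but the proposed remedy is not an argument. In the quotient $S^2_+$ the four branch points are cone points of angle $\pi$, so the metric is positively curved there and nonpositive-curvature tools are lost. ``Strip widths $\frac14$ around each branch point'' is never turned into an inequality.

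The paper closes this case with the hyperelliptic involution rather than hexagon geometry. Since $\iota$ acts by $-\mathrm{id}$ on $H_1(\Sigma_{1,2})$, a systolic loop $\gamma\subseteq\Sigma_{1,2}$ is homologous to $-\iota(\gamma)$.
\begin{itemize}
\item If the two are freely homotopic, the flat strip theorem gives an $\iota$-invariant flat cylinder (possibly degenerate) between them. On it $\iota$ is a half-turn with two antipodal fixed points on the median circle. These are distinct hexagon centers, at distance at least $\frac12$, so the length is at least $1$; equality gives family~(1).
\item If they are not homotopic, they cobound a subsurface of positive genus. This forces $\gamma$ to be homotopic to a boundary component. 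The boundary curves are the unique geodesics in their classes and have length $1$.
\end{itemize}
No corner-cutting analysis is needed.

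Three lesser points also need repair.
\begin{itemize}
\item In the crossing case you treat only a single crossing of $\mathcal{C}$. Two crossings in opposite directions contribute only $2(1-2h)\approx 0.92$ inside $\mathcal{C}$. You therefore also need a lower bound for geodesic arcs of $\Sigma_{1,2}$ returning to the same boundary component; the paper asserts $2h$ for all such arcs.
\item ``The long-side gluings only lengthen paths'' is backwards: identifications can only shorten distances. So $d(\mathcal{B},\iota(\mathcal{B}))\geq 2h$ has to be checked against paths through several hexagons and around the cone points.
\item Noncontractibility of the three families is immediate because closed geodesics in a nonpositively curved surface are never null-homotopic. ``They double-cover an arc between branch points'' is not by itself a reason.
\end{itemize}
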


\begin{proof}
By construction, the surface~$M$ decomposes into~$\Sigma_{1,2}$, consisting of four copies of~$H$, and the cylinder~$\mathcal{C}$.
The cylinder~$\mathcal{C}$ has unit systole, hence the systole of~$M$ is at most~$1$.
Furthermore, every geodesic arc of~$\mathcal{C}$ with endpoints on~$\partial \mathcal{C}$ is of length at least the height~$1-2h$ of the cylinder.
Similarly, every geodesic arc of~$\Sigma_{1,2}$ with endpoints on~$\partial \Sigma_{1,2}$ is of length at least~$2h$.
Apart from the closed geodesics of unit length foliating~$\mathcal{C}$, there are exactly four closed geodesics of unit length intersecting~$\mathcal{C}$, namely, the one-sided closed geodesics orthogonal to the short sides of the hexagon at their midpoint.
All the other noncontractible loops intersecting~$\mathcal{C}$ have length greater than~$1$.

Consider now a systolic loop
\[
\gamma\subseteq\Sigma_{1,2}.
\]
Since the hyperelliptic involution~$\iota$ on~$\Sigma_{1,2}$ induces minus the identity homomorphism in homology, the two simple loops~$\gamma$ and~$-\iota(\gamma)$ are homologous.

Suppose that the two simple curves~$\gamma$ and~$-\iota(\gamma)$ are homotopic.
By the flat strip theorem~\cite[\S II.2.13]{BH}, the two systolic loops~$\gamma$ and~$-\iota(\gamma)$ bound a flat cylinder.
This cylinder is invariant under~$\iota$ and contains exactly two fixed points of~$\iota$, which lie in its median circle in antipodal positions.
Since these two points are at distance~$\frac{1}{2}$ from each other, it follows that the length of~$\gamma$ is~$1$.
This situation occurs when $\gamma$ is parallel to the closed geodesic made of the two segments of length~$\frac{1}{2}$ joining the center of two adjacent hexagons.
In this case, $\gamma$ is a closed geodesic orthogonal to the long sides of the hexagon.

Suppose that the two simple curves~$\gamma$ and~$-\iota(\gamma)$ are homologous but not homotopic.
Then there exist two simple loops homotopic to~$\gamma$ and~$-\iota(\gamma)$ which bound a surface of positive genus.
This implies that $\gamma$ is homotopic to one of the boundary components of~$\Sigma_{1,2}$.
Since the boundary components of~$\Sigma_{1,2}$ are geodesic, and even uniquely geodesic in their homotopy classes, the curve~$\gamma$ lies in~$\partial \Sigma_{1,2} \simeq \partial \mathcal{C}$ and is of length~$1$. 
\end{proof}

It follows that the systolic area of~$M$ coincides with its area.
That is,
\begin{align*}
\sigma(M) & = 4 \area(H) + \area(\mathcal{C}) \\
 & = \frac{1}{8} \sqrt{\frac{111-\sqrt{33}}{2}} + 1 - \frac{1}{4} \sqrt{\frac{15-\sqrt{33}}{2}} \\
 & = 1 + \frac{1}{16} \sqrt{414-66 \sqrt{33}} \simeq 1.3690.
\end{align*}

\begin{remark}
There are 16 connected components of the region on $M$ free of
systolic loops.  Each of them is a right-angle triangle based on half
a short edge of a hexagon of type $H$.  The angle (at the base) of the
right-angle systole-free triangle (with base $\frac18$) is
$\frac{\theta}2$.  Note that in the nonpositive curvature case,
systole-free regions do appear for the systolically extremal metric
on~$3\RP^2$.
\end{remark}

\end{document}

%% file: geodesics.eps_tex
\begingroup%
  \makeatletter%
  \providecommand\color[2][]{%
    \errmessage{(Inkscape) Color is used for the text in Inkscape, but the package 'color.sty' is not loaded}%
    \renewcommand\color[2][]{}%
  }%
  \providecommand\transparent[1]{%
    \errmessage{(Inkscape) Transparency is used (non-zero) for the text in Inkscape, but the package 'transparent.sty' is not loaded}%
    \renewcommand\transparent[1]{}%
  }%
  \providecommand\rotatebox[2]{#2}%
  \newcommand*\fsize{\dimexpr\f@size pt\relax}%
  \newcommand*\lineheight[1]{\fontsize{\fsize}{#1\fsize}\selectfont}%
  \ifx\svgwidth\undefined%
    \setlength{\unitlength}{789.27163696bp}%
    \ifx\svgscale\undefined%
      \relax%
    \else%
      \setlength{\unitlength}{\unitlength * \real{\svgscale}}%
    \fi%
  \else%
    \setlength{\unitlength}{\svgwidth}%
  \fi%
  \global\let\svgwidth\undefined%
  \global\let\svgscale\undefined%
  \makeatother%
  \begin{picture}(1,0.70707072)%
    \lineheight{1}%
    \setlength\tabcolsep{0pt}%
    \put(0,0){\includegraphics[width=\unitlength]{geodesics.eps}}%
    \put(-0.13319557,1.03381442){\makebox(0,0)[lt]{\lineheight{1.25}\smash{\begin{tabular}[t]{l}$\bar{\eta}_0$\end{tabular}}}}%
    \put(0.21592148,0.45805372){\makebox(0,0)[lt]{\lineheight{1.25}\smash{\begin{tabular}[t]{l}$\bar{\nu}_0$\end{tabular}}}}%
    \put(0.68560551,0.45805372){\makebox(0,0)[lt]{\lineheight{1.25}\smash{\begin{tabular}[t]{l}$\bar{\nu}_t$\end{tabular}}}}%
    \put(1.04356225,1.0309185){\makebox(0,0)[lt]{\lineheight{1.25}\smash{\begin{tabular}[t]{l}$\bar{\eta}_t$\end{tabular}}}}%
    \put(0.46809569,-0.10103982){\makebox(0,0)[lt]{\lineheight{1.25}\smash{\begin{tabular}[t]{l}$x_0$\end{tabular}}}}%
    \put(0.15346562,0.25030796){\makebox(0,0)[lt]{\lineheight{1.25}\smash{\begin{tabular}[t]{l}$\frac{1}{2} \sys$\end{tabular}}}}%
    \put(0.12617332,0.69504924){\makebox(0,0)[lt]{\lineheight{1.25}\smash{\begin{tabular}[t]{l}$h$\end{tabular}}}}%
  \end{picture}%
\endgroup%

%% file: hexagon.eps_tex
\begingroup%
  \makeatletter%
  \providecommand\color[2][]{%
    \errmessage{(Inkscape) Color is used for the text in Inkscape, but the package 'color.sty' is not loaded}%
    \renewcommand\color[2][]{}%
  }%
  \providecommand\transparent[1]{%
    \errmessage{(Inkscape) Transparency is used (non-zero) for the text in Inkscape, but the package 'transparent.sty' is not loaded}%
    \renewcommand\transparent[1]{}%
  }%
  \providecommand\rotatebox[2]{#2}%
  \newcommand*\fsize{\dimexpr\f@size pt\relax}%
  \newcommand*\lineheight[1]{\fontsize{\fsize}{#1\fsize}\selectfont}%
  \ifx\svgwidth\undefined%
    \setlength{\unitlength}{789.27163696bp}%
    \ifx\svgscale\undefined%
      \relax%
    \else%
      \setlength{\unitlength}{\unitlength * \real{\svgscale}}%
    \fi%
  \else%
    \setlength{\unitlength}{\svgwidth}%
  \fi%
  \global\let\svgwidth\undefined%
  \global\let\svgscale\undefined%
  \makeatother%
  \begin{picture}(1,0.70707072)%
    \lineheight{1}%
    \setlength\tabcolsep{0pt}%
    \put(0,0){\includegraphics[width=\unitlength]{hexagon.eps}}%
    \put(0.62466971,0.589897){\makebox(0,0)[lt]{\lineheight{1.25}\smash{\begin{tabular}[t]{l}$\theta$\end{tabular}}}}%
    \put(0.72967101,0.66215365){\makebox(0,0)[lt]{\lineheight{1.25}\smash{\begin{tabular}[t]{l}$\frac{1}{4}$\end{tabular}}}}%
    \put(0.41331049,0.16964818){\makebox(0,0)[lt]{\lineheight{1.25}\smash{\begin{tabular}[t]{l}$h$\end{tabular}}}}%
    \put(0.45419696,-0.11801057){\makebox(0,0)[lt]{\lineheight{1.25}\smash{\begin{tabular}[t]{l}$\frac{1}{4}$\end{tabular}}}}%
  \end{picture}%
\endgroup%